\documentclass[11pt,a4paper]{article}
\usepackage[utf8]{inputenc}

\usepackage{hyperref}
\usepackage{amsmath,amsthm,amssymb,enumerate, bbm ,graphicx,color,caption,upgreek, float, tikz, subcaption,booktabs,longtable, appendix,graphics, pdfpages,rotating,mathtools, array, bm, blkarray,setspace,textcomp,pgfplots,todonotes, tcolorbox, mathtools}

\usepackage{arxiv}

\usepackage{algorithm}
\usepackage{algpseudocode}

\newtheorem{theorem}{Theorem}

\newtheorem{assumption}{Assumption}
\newtheorem{definition}{Definition}

\newtheorem{lemma}{Lemma}
\newtheorem{corollary}{Corollary}

\title{A predictor-corrector algorithm for semidefinite programming that uses the factor width cone}
\author{ \href{https://orcid.org/0000-0003-1771-0394}{\includegraphics[scale=0.06]{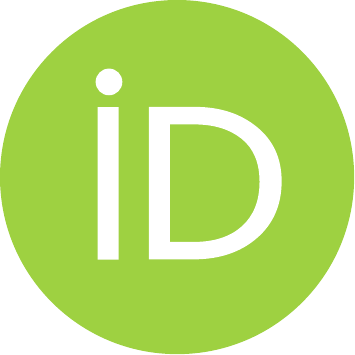}\hspace{1mm}Felix Kirschner} \\
		Department of Econometrics and OR\\
		Tilburg University\\
		Tilburg, Netherlands \\
	\texttt{f.c.kirschner@tilburguniversity.edu} \\
	\And
	\href{https://orcid.org/0000-0003-3377-0063}{\includegraphics[scale=0.06]{orcid.pdf}\hspace{1mm}Etienne de~Klerk} \\
	Department of Econometrics and OR\\
	Tilburg University\\
	Tilburg, Netherlands \\
	\texttt{e.deklerk@tilburguniversity.edu} \\
}
\date{}

\newcommand{\cnk}{{C_{k-1}^{n-1}}}
\newcommand{\dnk}{{C_{k-2}^{n-2}}}

\hypersetup{
pdftitle={A predictor-corrector algorithm for semidefinite programming that uses the factor width cone},
pdfsubject={},
pdfauthor={Felix Kirschner, Etienne de Klerk},
pdfkeywords={},
}

\begin{document}
\maketitle

\begin{abstract}
	We propose an interior point method (IPM) for solving semidefinite programming problems (SDPs). The standard interior point algorithms used to solve SDPs work in the space of positive semidefinite matrices. Contrary to that the proposed algorithm works in the cone of matrices of constant \emph{factor width}. This adaptation makes the proposed method more suitable for parallelization than the standard IPM. We prove global convergence and provide a complexity analysis. Our work is inspired by a series of papers by Ahmadi, Dash, Majumdar and Hall, and builds upon a recent preprint by Roig-Solvas and Sznaier [arXiv:2202.12374, 2022].
\end{abstract}

\section{Introduction}\label{sec:intro}
Semidefinite programming problems (SDPs) are a generalization of linear programming problems (LPs). While capturing a much larger set of problems, SDPs are still being solvable up to fixed precision in polynomial time in terms of the input data \cite{Nesterov1994}; see \cite{Klerk2016} for the complexity in the Turing model of computation. In practice this is, however, more complicated. While we are able to solve linear programs with millions of variables and constraints routinely, SDPs become intractable already for a few tens of thousands of constraints and for $n\times n$ matrix variables of the order $n \approx 1,000$. The reason is that each iteration of a typical interior point algorithm for SDP requires $\mathcal{O}(n^3m+n^2m^2 + m^3)$ operations, where $n$ is the size of the matrix variable and $m$ is the number of equality constraints; see e.g. \cite{Alizadeh1998}. However, solving large instances of SDPs is  of growing interest, due to applications in power flow problems on large power grids, SDP-based hierarchies for polynomial and combinatorial problems, etc (see \cite{Lasserre2001,Zhang2021,Zohrizadeh2020}). In the following we will revisit a relaxation of a given SDP, where the cone of positive semidefinite matrices is replaced by a more tractable cone, namely the cone of matrices of constant factor width \cite{Boman2005}. The simplest examples of matrices of constant factor width are non-negative diagonal matrices (corresponding to linear programs), and scaled diagonally dominant matrices (corresponding to second order cone programming) \cite{Ahmadi2014}. We then review how iteratively rotating the cone and solving the given optimization problem over this new set leads to a non-increasing sequence of optimal values lower bounded by the optimum of the sought SDP. This iterative procedure, due to \cite{Ahmadi2015}, does not lead to a convergent algorithm. However, its essence can be used to construct a convergent predictor-corrector interior point method, as was done in \cite{Sznaier22}. Our paper is inspired by ideas from \cite{Ahmadi2014, Ahmadi2017, Ahmadi2015,Ahmadi2019,Sznaier22}. In particular, we will extend the results in \cite{Sznaier22}, and give a more concise complexity analysis in our extended setting.

\subsection{Iterative approximation scheme}\label{subsec:iterativescheme}
Let the set of symmetric $n \times n$ matrices be given by $\mathbb{S}^n$, where $n \in \mathbb{N}$ is a positive integer. We write $[m]$ for the set $\{1, 2, \ldots, m\}$, where $m \in \mathbb{N}$. Consider a set $\{ A_i \in \mathbb{S}^{n}: i \in [m] \}$ of symmetric data matrices and define the linear operator
\[
    \mathcal{A}(X) = (\langle A_1,X \rangle ,\ldots,\langle A_m,X\rangle) \in \mathbb{R}^m,
\]
where $\langle X, Y \rangle := \mathrm{tr}(XY)$ for $X,Y \in \mathbb{S}^n$.
Further, define for $b \in \mathbb{R}^m$ the affine subspace
\begin{equation}
    L = \{ X \in \mathbb{S}^n : \mathcal{A}(X) = b\}.
\end{equation}
Consider the following semidefinite program

\begin{equation}\label{IterativeSchemeprimal}
\begin{aligned}
    v^\ast_{\mathrm{SDP}} = \inf \left\{  \langle A_0, X \rangle : \mathcal{A}(X) = b, X \in \mathbb{S}^n_+\right\},
\end{aligned}
\end{equation}
which we assume to be strictly feasible. Replacing the cone of positive semidefinite (psd) matrices in \eqref{IterativeSchemeprimal} by a cone $\mathcal{K} \subseteq \mathbb{S}^n_+$, which is more tractable, leads to the following program

\begin{equation}\label{primalRel1}
    v_{\mathcal{K}} = \inf \left\{ \langle A_0, X \rangle : \mathcal{A}(X) = b , X \in \mathcal{K}, \text{ where } \mathcal{K} \subseteq \mathbb{S}^n_+ \right\}.
\end{equation}
Clearly, $v_{\mathcal{K}} \ge v^\ast_{\mathrm{SDP}}$. The quality of the approximation depends on the chosen cone $\mathcal{K}$. In \cite{Ahmadi2014}, while focusing on sums-of-squares optimization the authors consider the cones of \emph{diagonally dominant} and \emph{scaled diagonally dominant} matrices. Ahmadi and Hall developed the idea of replacing the psd cone by a simpler cone further in \cite{Ahmadi2015}, leveraging an optimal solution of the relaxation. Essentially, the idea is as follows. Define the feasible set for \eqref{IterativeSchemeprimal} as
\[
    \mathcal{F}_{\mathrm{SDP}} = \left \{ X \succeq 0 : \mathcal{A}(X) = b \right \}.
\]
We will consider a sequence of strictly feasible points for \eqref{primalRel1}, denoted by $X_\ell$ for $\ell = 0, 1 \ldots$. Since $X_\ell \succeq 0$, the matrix $X_\ell^{1/2}$ is well-defined. One can \emph{update} the data matrices in the following way
\[
    A_i^{(\ell)} = X^{1/2}_\ell A_i X^{1/2}_\ell \quad (i \in \{0,1,\ldots,m\},\, \ell = 0,1,  \ldots),
\]
giving rise to a new linear operator
\[
    \mathcal{A}^{(\ell)}(X) = (\langle A_1^{(\ell)}, X\rangle ,\ldots,\langle A_m^{(\ell)}, X\rangle ) \in \mathbb{R}^m.
\]
We may also refer to this operation as \emph{rescaling} with respect to $X_\ell$.
Via this rescaling one obtains the following sequence of reformulations of \eqref{IterativeSchemeprimal}
\begin{equation}\label{SDPell}
        v^{\ast}_\mathrm{SDP} =\mathrm{min} \left\{ \langle A_0^{(\ell)}, X \rangle :
          \mathcal{A}^{(\ell)}(X) = b,
          X \in \mathbb{S}^n_+\right\},
\end{equation}
whose feasible set we define as
\[
    \mathcal{F}_{\mathrm{SDP}_\ell} =  \left \{ X \succeq 0 : \mathcal{A}^{(\ell)}(X) = b \right \}.
\]
For each $\ell$ the identity matrix is feasible, i.e., we have $X = I \in \mathcal{F}_{\mathrm{SDP}_\ell}$. To see this, note that for all $i \in [m]$ we have
\[
 \langle A_i^{(\ell)}, I \rangle = \langle \left(X_\ell \right)^{\frac{1}{2}}A_i \left(X_\ell \right)^{\frac{1}{2}}, I \rangle = \langle A_i ,X_\ell \rangle = b_i.
\]
Similarly, the identity leads to the same objective value in \eqref{SDPell} as $X_\ell$ in \eqref{primalRel1}. Let $X_0$ be an optimal solution to \eqref{primalRel1}. Rescaling with respect to $X_0$ we find by the same reasoning that $v_{\mathcal{K}}^{(0)} \le v_{\mathcal{K}}$, where
\begin{equation}\label{relaxell}
        v^{(\ell)}_\mathcal{K} =\mathrm{min}\left\{ \langle A_0^{(\ell)}, X \rangle :
          \mathcal{A}^{(\ell)}(X) = b ,
         X \in \mathcal{K} \right\}.
\end{equation}

Reiterating this procedure leads to a non-increasing sequence of values $\left\{ v^{(\ell)}_{\mathcal{K}} \right\}_{\ell \in  \mathbb{N}}$ lower bounded by $v^{\ast}_{\mathrm{SDP}}$. Unfortunately, this procedure does not converge to the true optimum of \eqref{IterativeSchemeprimal} in general, as mentioned in \cite{Sznaier22}. Indeed, it can happen that $\liminf_{\ell \rightarrow \infty} v_{\mathcal{K}}^{(\ell)} > v^*_{\mathrm{SDP}}$. The rest of this paper is devoted to the development and analysis of an algorithm, which converges to the optimal value $v^\ast_{\mathrm{SDP}}$. We thereby generalize results from \cite{Sznaier22}.

\subsection*{Outline of the paper}
This paper is conceptually divided into two parts. The first part contains sections \ref{sec:intro} and \ref{sec:ipm} and is devoted to introducing the setting as well as the algorithm. Our aim with the first part is to convey the concept in a comprehensible way. The second part consists of the remaining sections \ref{sec:barriers}-\ref{sec:discussion}. It is more technical and contains the derivation of objects used in the algorithm as well as the formal complexity analysis.

\subsection{The factor width cone}

Fix $n \in \mathbb{N}$. The cone of $n\times n$ matrices of \emph{factor width} $k$, denoted by FW$_n(k)$, is defined as
\[
    \mathrm{FW}_n(k) = \left\{ Y \in \mathbb{S}^n : Y = \sum_{i \in \mathbb{N}} x_i x_i^T \text{ for } x_i \in \mathbb{R}^n,\, \text{supp}(x_i) \le k\,, \forall i\,\right\}.
\]
The notion of factor width was first used in \cite{Boman2005} where the authors proved that $\mathrm{FW}_n(2)$ is the cone of scaled diagonally dominant matrices. Trivially, $\mathrm{FW}_n(1)$ is the cone of non-negative $n \times n$ diagonal matrices. Clearly, we have that
\[
    \mathrm{FW}_n(k) \subseteq \mathrm{FW}_n(k+1) \subseteq \mathbb{S}^n_+ \quad \forall k \in [n-1].
\]
Moreover, $\mathrm{FW}_n(n) = \mathbb{S}^n_+$. It is easy to see these cones are proper. As they define an inner approximation of the cone $\mathbb{S}^n_+$ we may use them in the aforementioned iterative scheme. Define
\[
\mathbb{S}^{(n,k)} := \underbrace{\mathbb{S}^k \times \dots \times \mathbb{S}^k}_{{{n}\choose{k}}\text{-times}} \text{ and } \mathbb{S}_+^{(n,k)} := \underbrace{\mathbb{S}^k_+ \times \dots \times \mathbb{S}^k_+}_{{{n}\choose{k}}\text{-times}}.
\]
An optimization problem over the cone $\mathrm{FW}_n(k)$ may be formulated as an optimization problem over the cone product $\mathbb{S}_+^{(n,k)}$. To see this we need to consider principal submatrices. For a matrix $S \in \mathbb{R}^{n \times n}$ we define the principal submatrix $S_{J,J}$ for $J \subseteq [n]$ to be the restriction of $S$ to rows and columns whose indices appear in $J$. Further, for a set $J = \{ i_1, \dots, i_{\vert J\vert} \} \subseteq [n]$ and a matrix $S \in \mathbb{R}^{\vert J\vert \times \vert J\vert}$ we define the $n \times n$ matrix $S_J^{\rightarrow n}$ as follows for $i,j \in [n]$
\begin{equation}\label{Yarrow}
    \left( S_J^{\rightarrow n} \right)_{i,j} = \begin{cases}
        S_{k,l} &\text{if }i = i_k, j = i_l\\
        0 &\text{otherwise.}
    \end{cases}
\end{equation}
In other words, $S_J^{\rightarrow n}$ has $S_J$ as principal sub-matrix indexed by $J$, and zeros elsewhere. Now, to write a program over $\mathrm{FW}_n(k)$ as an SDP note the following lemma. 
\begin{lemma}\label{lemmaXpsdRep}
    For any $X \in \mathrm{FW}_n(k)$ we have that
    \[
        X = \sum_{\vert J\vert = k} Y_J^{\rightarrow n}
    \]
    for suitable $Y_J \in \mathbb{S}^k_+$ and $J \subseteq [n], \vert J\vert = k$.
\end{lemma}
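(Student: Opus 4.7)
The plan is essentially a bookkeeping argument starting from the defining decomposition of $\mathrm{FW}_n(k)$. By definition, $X \in \mathrm{FW}_n(k)$ admits a representation $X = \sum_i x_i x_i^T$ with each $|\mathrm{supp}(x_i)| \le k$. Without loss of generality the index set in this sum may be taken to be finite, since $\mathrm{FW}_n(k)$ lies in the finite-dimensional space $\mathbb{S}^n$, so Carath\'eodory's theorem reduces any conic combination of rank-one generators to at most $\dim \mathbb{S}^n = \binom{n+1}{2}$ terms.

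First, for each $i$ in the (now finite) index set, I would choose some $J_i \subseteq [n]$ of size exactly $k$ with $\mathrm{supp}(x_i) \subseteq J_i$; if $|\mathrm{supp}(x_i)| < k$, the padding is arbitrary. Let $y_i \in \mathbb{R}^k$ be the vector obtained from $x_i$ by keeping only the coordinates indexed by $J_i$ (in the order induced by $J_i$). Then $y_i y_i^T \in \mathbb{S}^k_+$ and by the very definition of the embedding in \eqref{Yarrow} one checks entry-by-entry that
\[
    x_i x_i^T = (y_i y_i^T)_{J_i}^{\rightarrow n},
\]
because both sides vanish on any row or column indexed outside $J_i$ and agree on the submatrix indexed by $J_i$.

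Second, I would group the contributions by their associated index set. For each $J \subseteq [n]$ with $|J| = k$, set
\[
    Y_J := \sum_{i:\, J_i = J} y_i y_i^T \in \mathbb{S}^k_+,
\]
which is psd as a finite sum of psd rank-one matrices (an empty sum giving $0$). Since $S \mapsto S_J^{\rightarrow n}$ is linear for each fixed $J$, summing the displayed identity over $i$ and then regrouping yields $X = \sum_{|J| = k} Y_J^{\rightarrow n}$, as claimed.

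I do not anticipate a real obstacle here; the only minor subtlety is the case $|\mathrm{supp}(x_i)| < k$, which is handled by arbitrarily enlarging the support to some $J_i$ of size $k$ and noting that the extra coordinates of $y_i$ are zero and thus contribute trivially to $y_i y_i^T$. Everything else is linearity of the embedding and closure of $\mathbb{S}^k_+$ under finite addition.
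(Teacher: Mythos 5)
Your argument is correct and is precisely the ``straightforward'' proof the paper declines to spell out: pad each support to a set $J_i$ of size exactly $k$, observe that $x_i x_i^T = (y_i y_i^T)_{J_i}^{\rightarrow n}$ by definition of the embedding, and group by $J$ using linearity of $S \mapsto S_J^{\rightarrow n}$ and closure of $\mathbb{S}^k_+$ under sums. The Carath\'eodory remark correctly disposes of the possibly infinite index set in the definition (though one could equally note that, grouping by the finitely many choices of $J$, each sub-series of psd matrices converges in the closed cone $\mathbb{S}^k_+$).
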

\begin{proof}
    The proof is straightforward and omitted for the sake of brevity.
\end{proof}
Thus, we can write

\begin{equation}\label{fwProg}
    \inf \left\{ \langle C, X \rangle : \mathcal{A}(X) = b, X \in \mathrm{FW}_n(k) \right\}
\end{equation}

as

\begin{equation}\label{progFW}
    \inf \left\{ \sum_{\vert J\vert=k}\langle C_{J,J}, Y_J \rangle : \sum_{\vert J\vert=k} \langle (A_i)_{J,J}, Y_J \rangle = b_i, Y_J \in \mathbb{S}^k_+, \; \forall \vert J\vert = k\right\}.
\end{equation}

It is straightforward to show that the dual cone is given by

\[
    \mathrm{FW}_n(k)^\ast = \{ S \in \mathbb{S}^n : S_{J,J} \succeq 0 \text{ for } J \subseteq [n], \vert J \vert = k \}.
\]
The dual cone has been studied in the context of semidefinite optimization in \cite{Blekherman2022}, where it was shown that the distance of $\mathrm{FW}_n(k)^\ast$ and $\mathbb{S}^n_+$ in the Frobenius norm can be upper bounded by $\frac{n-k}{n+k-2}$ for matrices of trace $1$. For $k\ge 3n/4$ and $n\ge 97$ this bound can be improved to $O(n^{-3/2})$ (see \cite{Blekherman2022}).

\section{Interior point methods and the central path}\label{sec:ipm}

Interior point methods (IPMs) are among the most commonly used algorithms to solve conic optimization problems in practice. Notable software for IPMs include Mosek \cite{mosek}, CSDP \cite{Borchers1999}, SDPA \cite{SDPA,Fujisawa2002}, SeDuMi \cite{sedumi} and SDPT3 \cite{Toh1999}. In the remainder of this section, we will closely follow the notation used in \cite{Renegar2001}, since we will make use of several results from this book. Consider the following conic optimization problem for a proper convex cone $\mathcal{K} \subset \mathbb{R}^n$:
\begin{equation*}
     \min \left\{ \langle c, x \rangle : \langle a_i, x \rangle = b_i, i \in [m], x \in \mathcal{K}\right\}.
\end{equation*}
In IPMs the cone membership constraint is replaced by adding a convex penalty function $f$ to the objective. This function $f$ is a so-called self-concordant barrier function. Loosely speaking, the function $f$ returns larger values the closer the input is to the boundary of the cone and tends to infinity as the boundary is approached. In order to formally define self-concordant barrier functionals, let $f : \mathbb{R}^n \supset D_f \rightarrow \mathbb{R}$ be such that its Hessian $H(x)$ is positive definite (pd) for all $x \in D_f$. With respect to this function, we can define a \emph{local inner product} as follows
\[
    \langle u,v \rangle_x := \langle u, H(x)v \rangle,
\]
where $u,v \in \mathbb{R}^n$ and $\langle \cdot, \cdot \rangle$ is some reference inner product. Let $B_x(y,r)$ be the open ball centered at $y$ with radius $r>0$ whose radius is measured by $\vert\vert\cdot\vert\vert_x$, i.e., the norm arising from the local inner product at $x$.
\begin{definition}{(see \cite[§~2.2.1]{Renegar2001})}
    A functional $f$ is called \emph{(strongly non-degenerate) self-concordant} if for all $x \in D_f$ we have that $B_x(x,1) \subset D_f$ and whenever $y \in B_x(x,1)$ we have
    \[
        1-\vert\vert y-x\vert\vert_x  \le \frac{\vert\vert v \vert\vert_x}{\vert\vert v \vert\vert_x} \le \frac{1}{1-\vert\vert y-x \vert\vert_x} \text{ for all }v \neq 0.
    \]
    A functional $f$ is called a \emph{self-concordant barrier functional} if $f$ is self-concordant and additionally satisfies
    \[
        \vartheta_f := \sup_{x \in D_f} \vert\vert H(x)^{-1}g(x)\vert \vert_x^2<\infty,
    \]
    where $g(x)$ is the gradient of $f$.
\end{definition}
We refer to $\vartheta_f$ as the complexity value of $f$ (see \cite[p. 35]{Renegar2001}), which will become crucial in our complexity analysis.
Henceforth, let $f$ be a self-concordant barrier functional for $\mathcal{K}$ and consider the following family of problems for positive $\eta \in \mathbb{R}_+$

\begin{equation}\label{ipm_dummy}
\begin{aligned}
    z_{\eta} = \mathrm{argmin}\;  & \eta \, \langle c, x \rangle + f(x) \\ 
    \text{s.t. } &\langle a_i, x \rangle = b_i \quad i \in [m]. \\
\end{aligned}
\end{equation}
The minimizers $z_\eta$ of \eqref{ipm_dummy} define a curve, parametrized by $\eta$ in the interior of $\mathcal{K}$. This curve is called the \emph{central path}. For $\eta \rightarrow \infty$ one can show that $z_\eta \rightarrow x^\ast$. Interior point methods work by subsequently approximating a sequence of points $\{ z_{\eta_i} : i = 1, \dots, N\}$ on the central path, where $\eta_1 < \eta_2 < \ldots$ such that $z_{\eta_N}$ is within the desired distance to the optimal solution. The type of interior point method we consider is an adaptation of the \emph{predictor-corrector method} (see \cite[§~2.4.4]{Renegar2001}). This method uses the ordinary \emph{affine scaling direction} to produce a new point inside the cone with decreased objective value. Afterwards, a series of corrector steps is performed to obtain feasible solutions with the same objective value that lie increasingly close to the central path. Interior point methods typically rely on Newton's method in each step, where the convergence rate depends on the so-called Newton decrement. 
\begin{definition}
If $f:\mathbb{R}^n \rightarrow \mathbb{R}$ has a gradient $g(x)$ and positive definite Hessian $H(x) \succ 0$ at a point $x$ in its domain, then
the Newton decrement of $f$ at $x$ is defined as
\[
\Delta(f,x) = \sqrt{\langle g(x), H^{-1}(x)g(x)\rangle}.
\]
\end{definition}
For self-concordant functions $f$, a sufficiently small value of $\Delta(f,x)$, e.g., $\Delta(f,x) < 1/9$,  implies that $x$ is close to the minimizer of $f$ (cf. \cite[Theorem 2.2.5]{Renegar2001}).

Suppose we are given a starting point $x_0$, which is close to $z_{\eta_0}$ for some $\eta_0 \in \mathbb{R}$. The affine-scaling direction is given by $-c_{x_0} := - H(x_0)c$ and points approximately tangential to the central path in the direction of decreasing the objective value $\langle c, x \rangle$ ($-H(z_{\eta_0})c$ is exactly tangential to the central path). The predictor step moves from $x_0$ a fixed fraction $\sigma \in (0,1)$ of the distance towards the boundary of the feasible set in the affine-scaling direction, thereby producing a new point $x_1$ satisfying $\langle c, x_1 \rangle < \langle c, x_0 \rangle$. The new point $x_1$ is not necessarily close to the central path. The algorithm then proceeds to produce a sequence of feasible points $x_2, x_3, \dots$ satisfying $\langle c, x_1\rangle = \langle c , x_i \rangle$ for $i = 2, 3, \dots$ while each $x_i$ for $i = 2, 3, \dots$ is closer to the central path than its predecessor $x_{i-1}$. In other words, the algorithm targets the point $z_{\eta_1}$ on the central path with the same objective value as $x_1$ and produces a sequence of points converging to $z_{\eta_1}$. Once an $x_j$ is found such that $\Delta(f, x_j) < 1/9$, the next predictor step is taken. This procedure is repeated until an $\varepsilon$-optimal solution is found. The corrector phase works by minimizing the self-concordant barrier restricted to the feasible affine space intersected with the set of all $x \in \mathbb{R}^n$ such that $\langle c, x\rangle = \langle c, x_i\rangle$, where $x_i$ is the point produced by the most recent predictor step. This minimization problem is solved iteratively by performing line searches along the direction given by the Newton step for the restricted functional. We provide a visualization of the predictor-corrector method in Figure \ref{fig:pred_corr}.

\subsection*{Newton decrements for functions restricted to subspaces} 

If a self-concordant function $f$ is restricted to a (translated) linear subspace $L$, and denoted by $f_{\vert L}$, then the Newton decrement at $x$ becomes
\[
\Delta\left(f_{\vert L},x\right) = {\vert \vert P_{L,x}H^{-1}(x)g(x)\vert \vert _x},
\]
where $\|\cdot\|_x$ is the norm induced by the inner product $\langle u,v\rangle_x = \langle u, H(x)v \rangle$, and $P_{L,x}$ is the orthogonal projection onto
$L$ for the $\|\cdot\|_x$ norm; see \cite[§~1.6]{Renegar2001}.

Note that we have
    \begin{align*}
        \Delta(f, x) & = \langle g(x), H^{-1}(x)g(x)\rangle^{1/2} = \langle g(x), - n(x)\rangle^{1/2} \\
        & = \langle n(x),   n(x)\rangle_{x}^{1/2} = \vert \vert n(x)\vert \vert_{x}  = \sup_{\vert \vert d \vert \vert_{x} = 1} \langle d, n(x)\rangle_{x},
    \end{align*}
    where $n(x)$ is the Newton step at $x$, i.e., $n(x)= -H(x)^{-1}g(x)$.
    Hence, restricting the function $f$ to a subspace $L$ we find
    \begin{equation}\label{varNorm}
        \begin{aligned}
        \Delta\left(f_{\vert _L}, x\right) & = \sup_{\vert \vert d \vert \vert _{x = 1}} \langle d, P_{L,x} n(x) \rangle_{x} = \sup_{\stackrel{\vert \vert d \vert \vert _{x = 1}}{d \in L}} \langle d, n(x) \rangle_{x} \\
        &= \sup_{0 \neq d \in L} \frac{\langle d, n(x) \rangle_{x}}{\vert \vert d\vert \vert _{x}} \ge \frac{\langle d, n(x) \rangle_{x}}{\vert \vert d\vert \vert _{x}} \text{ for all } d \in L\setminus \{0 \}.
        \end{aligned}
    \end{equation}

\begin{figure}[]
    \includegraphics[width=0.9\textwidth]{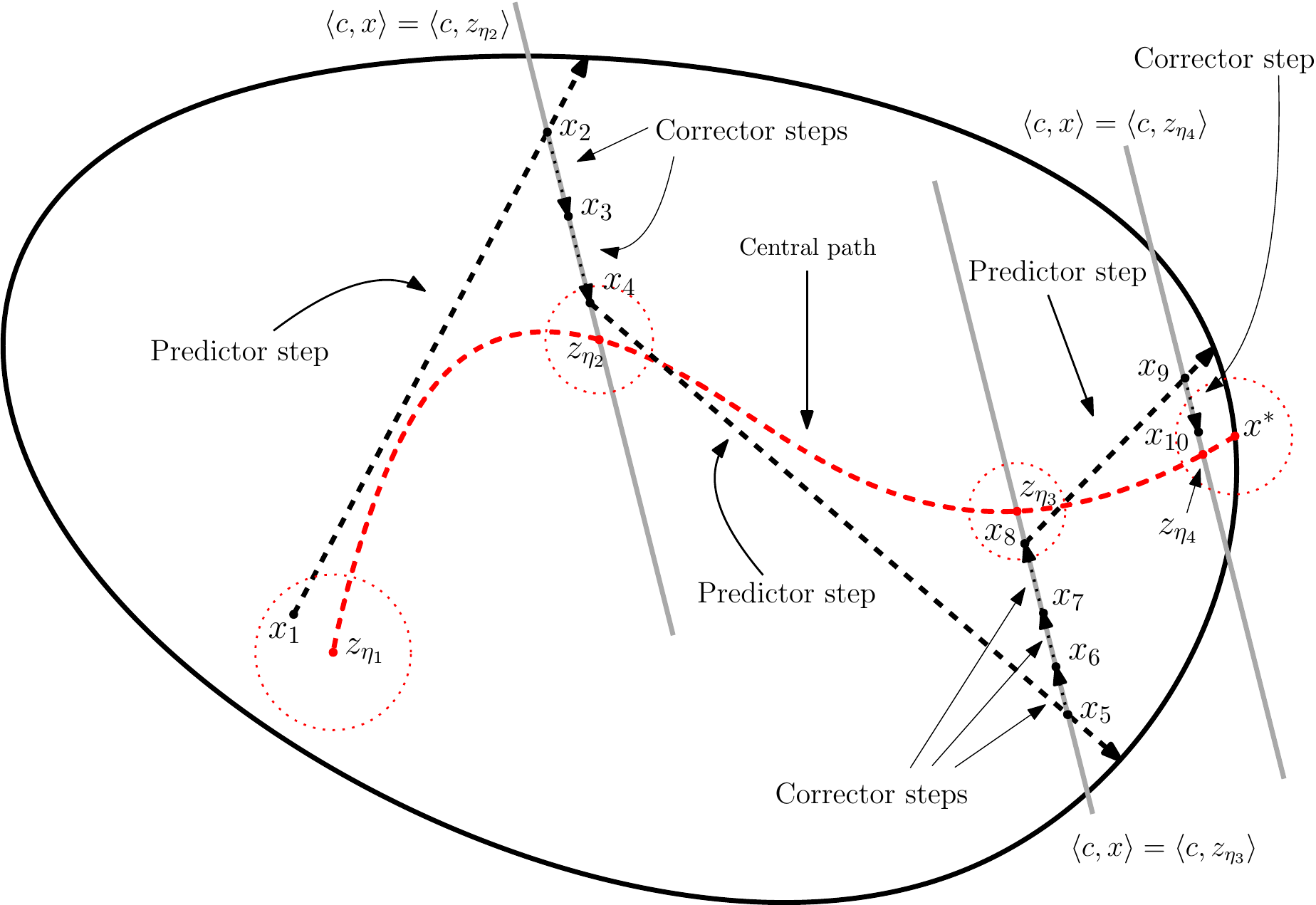}
    \caption{Visualization of predictor-corrector method. Initial feasible solution close to central path (red) is given by $x_1$. Algorithm performs predictor step returning $x_2$. Corrector steps are taken until point close enough to central path ($x_4$) is found. Next predictor step returns $x_5$. Corrector steps are taken until $x_8$ is found, which is close enough to central path to perform next predictor step returning $x_9$. After one corrector step the final point $x_{10}$ is $\varepsilon$-close to $x^\ast$.}\label{fig:pred_corr}
\end{figure}

\subsection{A predictor-corrector method}\label{sec:algorithm}
In this subsection we propose our algorithm which makes use of the rescaling introduced in section \ref{subsec:iterativescheme}. Our aim is to provide a comprehensible exposition, while the details are postponed to the second part of the paper, beginning with section \ref{sec:barriers}.

Algorithm \ref{alg:pc_ipm} is an adaption of the predictor-corrector method as described in \cite[§~2.2.4]{Renegar2001}. Before describing the algorithm in detail we fix some notation. Let
\[
    \mathcal{Y} = \left\{ Y_J \in \mathbb{S}^k : J \subset [n], \vert J \vert = k \right\}
\]
be a collection of ${{n}\choose{k}}$ matrices of size $k \times k$.
We define the operator $\Psi$ as
\[
    \Psi(\mathcal{Y}) = \sum_{ \vert J \vert = k}Y^{\rightarrow n}_J,
\]
where we made use of the notation defined in \eqref{Yarrow}.
Hence, if $\mathcal{Y}$ is a collection of positive semidefinite $k\times k$ matrices, then $\Psi(\mathcal{Y}) \in \mathrm{FW}_n(k)$. Furthermore, let
\begin{equation}\label{Y0}
    \mathcal{Y}_0 = \{ Y_J = 1/\cnk I_{k \times k} : J \subset [n], \vert J \vert = k \},
\end{equation}
where we denote for $n,k \in \mathbb{N}$ the binomial coefficient as ${{n}\choose{k}} =: C_k^n$, so that $\Psi(\mathcal{Y}_0) = I$. Now let $X_\ell$ be a strictly feasible solution to a problem of form \eqref{IterativeSchemeprimal} and rescale the data matrices with respect to $X_\ell$. Recall the feasible set of the resulting SDP is given by
\begin{equation}\label{Lell}
    L_\ell = \{ X \in \mathbb{S}^n : \mathcal{A}^{(\ell)}_0(X) = b \}.
\end{equation}
Likewise, the feasible set of the factor width relaxation written over $\mathbb{S}^{(n,k)}_+$ (cf. \eqref{progFW}) can be written as
\begin{equation}\label{Lpsiell}
    L^{\Psi}_{\ell}  = \{ \mathcal{Y} \in \mathbb{S}^{(n,k)} : (\mathcal{A}^{(\ell)} \circ \Psi) (\mathcal{Y}) = b\}.
\end{equation}
Note that $I \in L_\ell$ and $\mathcal{Y}_0 \in L^{\Psi}_\ell$. We emphasize that, by definition, for any element $\mathcal{Y} \in L_{\ell}^{\Psi}$ we have $\Psi(\mathcal{Y}) \in L_\ell$.

\subsection*{Main method}
The algorithm requires a feasible starting point $X_0$ close to the central path, which is used in the first rescaling step. We also require an $\varepsilon > 0$, i.e., our desired accuracy as well as a $\sigma \in (0,1)$ used in the predictor step. In the following let $f^{\mathrm{FW}(k)}$ be a self-concordant barrier function for $\mathbb{S}^{(n,k)}_+$ (we postpone its derivation to section \ref{sec:barriers}, for now we assume it exists and is efficiently computable). In the algorithm we denote the restriction of $f^{\mathrm{FW}(k)}$ to the subspace $\mathrm{null}(L^{\Psi}_\ell)$ by $f^{\mathrm{FW}(k)}_{\vert\mathrm{null}(L^{\Psi}_\ell)}$. The algorithm initializes $\ell = 0$. The outer while loop repeats until an $\varepsilon$ optimal solution is found. If after rescaling with respect to $X_\ell$ the Newton decrement at $\mathcal{Y}_0$ satisfies
\[
    \Delta\left(f^{\mathrm{FW}(k)}_{\vert \mathrm{null}(L^{\Psi}_\ell)}, \mathcal{Y}_0\right) \le 1/14
\]
the predictor subroutine is called. Here, the affine-scaling direction is projected onto the null space of $L_\ell^{\Psi}$, call it $\mathcal{Z}$. Clearly, $\mathcal{Y}_0 + s \mathcal{Z} \in L_\ell^{\Psi}$ for all $s \in \mathbb{R}$. Then the subroutine computes
\[
    s^\ast = \sup \left\{ s : \mathcal{Y}_0 - s \mathcal{Z} \in \mathbb{S}^{(n,k)}_+ \right\}
\]
which provides the necessary notion of distance to the boundary in terms of $\mathcal{Y}_0$ and $\mathcal{Z}$. The returned point $\mathcal{Y}_\ell := \mathcal{Y}_0 + \sigma s^\ast \mathcal{Z}$ is feasible and decreases the objective value, as shown in section \ref{sec:complexity}. If the Newton decrement is not small enough, the corrector subroutine is called. Let $v_\ell = \langle A_0, X_\ell \rangle$, i.e., the objective value of the previous iteration, and define

\[
    L_\ell^{\Psi}(v_\ell) = \{ \mathcal{Y} \in \mathbb{S}^{(n,k)}_+ : \langle A_0 , \Psi(\mathcal{Y}) \rangle = v_\ell, \mathcal{A}^{(\ell)}(\Psi(\mathcal{Y}))=b\}.
\]
Let $x_0 := \mathcal{Y}_0$. Denote by $n_{\vert L^{\Psi}_\ell(v_{\ell})}(x_i)$ the Newton step of $f^{\mathrm{FW}(k)}_{\vert L^{\Psi}_\ell(v_{\ell})}$ at a point $x_i$. The corrector step now computes
\[
    x_{i+1} = \mathrm{argmin}_{t} f^{\mathrm{FW}(k)}\left( x_i + t n_{\vert L^{\Psi}_\ell(v_{\ell})}(x_i)\right)
\]
until $x_{i+1}$ is close enough to the central path of the rescaled problem over $\mathbb{S}^{(n,k)}_+$ and returns $\mathcal{Y}_\ell := x_{i+1}$. We will prove in section \ref{sec:relating_barriers} how this leads to a decrease in distance to the central path of original SDP. Note that multiple calls of the corrector step may be necessary as after rescaling the Newton decrement might not be small enough anymore. However, as we prove later on, the maximum number of corrector step can be bounded in terms of the problem data.
Let $\mathcal{Y}_\ell$ be the point returned by one of the subroutines. We set
\[
    X_{\ell+1} = X_{\ell}^{1/2} \Psi(\mathcal{Y}_\ell)X_{\ell}^{1/2}.
\]
Then
\[
    \langle A^{(\ell+1)}_i, I \rangle = \langle A^{(\ell)}_i, \Psi(\mathcal{Y}_\ell) \rangle = \langle A_i , X_{\ell+1} \rangle
\]
for all $i = 0,1, \dots, m$. 

\subsection*{Termination criterion}
In the predictor as well as in the corrector subroutine we solve a linear system for $y \in \mathbb{R}^m$. The solution of this linear system may be interpreted as a dual feasible solution provided the current iterate is sufficiently close to the central path. Hence, we can approximate the duality gap of our problem by calculating the difference
\[
    \langle A_0, X_\ell \rangle - y^Tb \ge 0,
\]
where $y$ is calculated in every subroutine call. We may use this as a termination criterion. Once the duality gap falls below some $\varepsilon > 0$ chosen beforehand, we terminate with an $\varepsilon$ optimal solution.



\begin{algorithm}
    \caption{Predictor-Corrector SDP algorithm using FW$_n(k)$}\label{alg:pc_ipm}
    \begin{algorithmic}
    \Require $\varepsilon > 0,\, \sigma \in (0,1), \, X_0$ close to CP
    \State $\ell \gets 0$
    \While{Duality gap $> \varepsilon$}
            \State $A_i^{(\ell)} \gets \left(X_{\ell}\right)^{1/2}A_i \left(X_{\ell}\right)^{1/2}$, for $i = 0,1,..., m$
            \If{$\Delta\left(f_{\vert \mathrm{null}(L^{\Psi}_\ell)}^{\mathrm{FW}(k)}, \mathcal{Y}_0\right) \le \frac{1}{14}$}
                \State $\mathcal{Y}_\ell \gets {\tt Predictor\_Step}(\mathcal{A}^{(\ell)}, A_0^{(\ell)}, \sigma)$
            \Else
                \State $\mathcal{Y}_{\ell} \gets {\tt Corrector\_Step}(\mathcal{A}^{(\ell)}_0 \circ \Psi, f^{\mathrm{FW}(k)}, \mathcal{Y}_0)$
            \EndIf
            \State $X_{\ell+1} \gets \left(X_{\ell}\right)^{1/2}\Psi(\mathcal{Y}_{\ell})\left(X_{\ell}\right)^{1/2}$
            \State $\ell \gets \ell + 1$
    \EndWhile \\
    \Return $X_{\ell}$
    \end{algorithmic}
    \end{algorithm}

\begin{algorithm}
    \caption{Subroutine {\tt Predictor\_Step}}\label{alg:subroutine_predictor_method}
    \begin{algorithmic}
        \Require $\mathcal{A}, A_0, \sigma \in (0,1)$
            \State Solve for $y$: $\mathcal{A}A_0 = \mathcal{A}\mathcal{A}^\ast y$
            \State $ \mathcal{Z} =\Psi^{\dagger}(\mathcal{A}^\ast y-A_0)$
            \State $s^\ast \gets \sup \{ s : \mathcal{Y}_0 - s \mathcal{Z} \in \mathrm{FW}_n(k)\}$
            \State $\mathcal{Y} \gets \mathcal{Y}_0 - \sigma s^\ast \mathcal{Z}$ \\
            \Return $\mathcal{Y}$
        \end{algorithmic}
\end{algorithm}

\begin{algorithm}
    \caption{Subroutine {\tt Corrector\_Step}}\label{alg:subroutine_corrector_method}
    \begin{algorithmic}
        \Require $\mathcal{A}, f,  x^{(0)} : \Delta\left(f\vert_L,x^{(0)}\right) > \frac{1}{14},\, (L = \mathrm{null}(\mathcal{A}))$
            \State $j \gets 0$
            \While{$ \left(f_{\vert_L}
            ,x^{(j)}\right)> \frac{1}{14}$}
            \State Solve for $y$: $\mathcal{A}H(x^{(j)})^{-1}\mathcal{A}^\ast y = \mathcal{A}H(x^{(j)})^{-1}g(x^{(j)})$
                \State $n_{\vert_L}(x^{(j)})\gets H(x^{(j)})^{-1}\left( \mathcal{A}^\ast y - g(x^{(j)})\right)$
                \State $x^{(j+1)} \gets \mathrm{argmin}_t \; f\left( x^{(j)} + t n_{\vert_L}(x^{(j)}) \right)$
                \State $j \gets j+1$
            \EndWhile \\
        \Return $x^{(j-1)}$
    \end{algorithmic}
\end{algorithm}

\section{Barrier functionals for $\mathbb{S}^{n}_+$ and $\mathrm{FW}_n(k)$}\label{sec:barriers}

In this section we derive the self-concordant barrier functional for the cone $\mathbb{S}^{(n,k)}_+$ which is used in the algorithm. Note that the ordinary self-concordant barrier for $\mathbb{S}^n_+$ is given by $f^{\mathrm{SDP}}(X) = - \log (\det (X))$. We will emphasize parallels to the work of Roig-Solvas and Sznaier \cite{Sznaier22}.

In order to construct a self-concordant barrier function for our underlying set, we introduce the notions of hyper-graphs and edge colorings as well as a well-known result about these objects.

\begin{definition}
    A hyper-graph $\mathcal{H} = (V,E)$ consists of a set $V = \{1, \dots, n\}$ of vertices and a set of hyper-edges $E \subseteq \{ J \subseteq V : \vert J \vert \ge 2\}$, which are subsets of the vertex set $V$. If all elements in $E$ contain exactly $k$ vertices, we call the corresponding hyper-graph \emph{$k$-uniform}.
\end{definition}

\begin{definition}
    Let $\mathcal{H} = (V,E)$ be a hyper-graph. A proper hyper-edge coloring with $m$ colors is a partition of the hyper-edge set $E$ into $m$ disjoint sets, say $E= \cup_{i \in [m]} S_i$ such that $S_i \cap S_j = \emptyset$ if $i \neq j$, i.e., two hyper-edges that share a vertex are not in the same set. In other words, a proper hyper-edge coloring assigns a color to every hyper-edge such that, if a given vertex appears in two different hyper-edges, they have different colors.
\end{definition}

\begin{theorem}[Baranyai's theorem \cite{Baranyai1975}]\label{Baranyai}
    Let $k,n \in \mathbb{N}$ such that $k \vert n$ and let $K^n_k$ the complete $k$-uniform hyper-graph on $n$ vertices. Then there exists a proper hyper-edge coloring using $\cnk$ colors.
\end{theorem}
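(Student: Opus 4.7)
The plan is to follow Baranyai's original combinatorial argument; this is a classical result and I would not expect a substantially shorter proof to emerge within a single theorem. The first step is a reformulation. A proper hyper-edge coloring of $K^n_k$ with $r := \cnk$ colors is equivalent to a partition of the $\binom{n}{k}$ hyperedges into $r$ perfect matchings of $[n]$ (i.e.\ partitions of the vertex set into $n/k$ pairwise disjoint $k$-subsets). Indeed, each color class must be a matching by definition of ``proper''; the counting identity $\binom{n}{k} = r \cdot (n/k)$, which uses $k \mid n$, then forces every color class to be a \emph{perfect} matching. Conversely, any such partition supplies a proper coloring.

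The key technical ingredient is \emph{Baranyai's rounding lemma}: if $A \in \mathbb{R}^{p \times q}$ has integer row sums and integer column sums, then there exists an integer matrix $B \in \mathbb{Z}^{p \times q}$ with the same row and column sums and satisfying $|A_{ij} - B_{ij}| < 1$ for every entry. I would prove the lemma by a bipartite flow argument, or equivalently by iteratively perturbing entries along an alternating cycle in the subgraph induced by the non-integer entries: alternately adding and subtracting the largest feasible slack along such a cycle preserves all row and column sums while strictly reducing the number of non-integer entries. Iterating yields the desired $B$.

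With the lemma in hand, the coloring is built by a refinement induction. For $\ell = 1, 2, \ldots, k$ I would maintain an $r \times \binom{n}{\ell}$ integer matrix $M^{(\ell)}$ whose $(i,T)$-entry records how many hyperedges of the $i$-th prospective perfect matching contain the $\ell$-subset $T \subseteq [n]$ as a sub-set. Starting from $\ell = 1$ (singletons), where the entries are forced by symmetry, the natural fractional refinement at each step has integer row and column marginals thanks to $k \mid n$ and the identity $\binom{n}{\ell}\binom{n-\ell}{k-\ell} = \binom{n}{k}\binom{k}{\ell}$. Applying the rounding lemma produces an integer matrix $M^{(\ell+1)}$ compatible with $M^{(\ell)}$. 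When $\ell = k$, the rows of $M^{(k)}$ encode the $r$ sought perfect matchings, and assigning each its own color completes the proof.

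The main obstacle will be the bookkeeping in the inductive step: I must verify that at every stage the fractional matrix really does have integer row and column marginals (this is exactly where the divisibility assumption $k \mid n$ is used essentially), and that the output of the rounding lemma can be interpreted as a genuine combinatorial refinement of the previous partial partitions, rather than an arbitrary integer matrix with the correct marginals. Setting up this combinatorial correspondence is the heart of the argument; once it is in place, the remainder proceeds mechanically.
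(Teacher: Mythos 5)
The paper does not prove Baranyai's theorem: it is stated and used as a black box, with a citation to Baranyai's 1975 paper. There is therefore no proof in the paper to compare yours against, and nothing in the rest of the paper depends on the internal details of the proof.

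Regarding your sketch on its own terms: the high-level plan is right (reformulate as a 1-factorization, apply an integer rounding lemma, induct), but the specific inductive scheme has a real gap. Your matrix $M^{(\ell)}$ records, for each prospective matching $i$ and each $\ell$-subset $T$, the count of hyperedges in matching $i$ that contain $T$. The constraints linking $M^{(\ell)}$ to $M^{(\ell+1)}$ are not row/column marginals of a single matrix: for each fixed $i$ you must satisfy a bipartite marginal system between $\ell$-subsets and $(\ell+1)$-subsets, namely
\[
\sum_{\substack{T' \supset T \\ |T'|=\ell+1}} M^{(\ell+1)}_{i,T'} = (k-\ell)\,M^{(\ell)}_{i,T} \quad \text{for every } \ell\text{-set } T,
\]
\emph{and} the column sums $\sum_i M^{(\ell+1)}_{i,T'} = \binom{n-\ell-1}{k-\ell-1}$ are also prescribed. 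That is a general transportation-type system, not the two-marginal row/column structure your rounding lemma handles, and its constraint matrix is not totally unimodular in general, so integral solutions are not automatic. A further issue is that $M^{(\ell)}$ stores only counts: an integer $M^{(\ell+1)}$ with the right marginals need not be realized by any actual family of partial matchings refining the previous stage, which is precisely the ``combinatorial correspondence'' you flag as the heart of the matter but do not establish. The standard argument (Baranyai, as streamlined by Brouwer and Schrijver) sidesteps both problems by inducting on the ground set rather than on subset size: for $m=0,\ldots,n$ one maintains $\binom{n-1}{k-1}$ explicit partitions of $[m]$ into $n/k$ parts of size at most $k$, with each subset of size at most $k$ occurring as a part a prescribed number of times, and the step to $m+1$ decides which part of each partition absorbs the new element. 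That is a genuine bipartite (row/column) rounding problem, the partial objects are carried explicitly rather than encoded by counts, and the rounding lemma applies cleanly. You would need to switch to that parametrization for the induction to close.
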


In \eqref{progFW} we wrote a program over $\mathrm{FW}_n(k)$ as an equivalent program over the cone product $\mathbb{S}^{(n,k)}_+$. The algorithm uses a self-concordant barrier function over said cone product. The mapping $\Psi$ from $\mathbb{S}_+^{(n,k)}$ to $\mathrm{FW}_n(k)$ is surjective, but not bijective, since multiple elements in the former may give rise to the same element in the latter set. 
\begin{assumption}
    Throughout we will assume $k \vert n$ for some $n \in \mathbb{N}$ and $2 \le k \in \mathbb{N}$.
\end{assumption}

In the following we will let $\mathcal{J} = \{J \subset [n] : \vert J \vert = k\}$ and
\[
    \mathcal{Y} = \left\{ Y_J : J \in \mathcal{J} \right\}
\]
be a collection of ${{n}\choose{k}}$ matrices of size $k \times k$.
We recall the operator $\Psi$ is defined as
\[
    \Psi(\mathcal{Y}) = \sum_{J \in \mathcal{J}}Y^{\rightarrow n}_J.
\]
The following generalizes Lemma 4.4 in \cite{Sznaier22}, where a similar result is proved for $k=2$. It will be crucial in our analysis as it allows us to compare the values taken by the barrier functionals on $\mathbb{S}^{(n,k)}_+$ and $\mathbb{S}^n_+$ at $\mathcal{Y}$ and $\Psi(\mathcal{Y})$, respectively.
\begin{lemma}\label{lemma1}
    Let
    \[
    f^{\mathrm{FW}(k)}(\mathcal{Y})  = -\sum_{J \in \mathcal{J}} \log(\det(Y_J))\, , \, \mathcal{Y} \in \mathrm{int}\left( \mathbb{S}^{(n,k)_+} \right).
    \]
    The barrier $f^{\mathrm{FW}(k)}(\mathcal{Y})$ is self-concordant on $\mathrm{int}\left( \mathbb{S}^{(n,k)_+} \right)$. Furthermore, if $X = \Psi(\mathcal{Y})$ then
    \begin{align*}
        f^{\mathrm{FW}(k)}(\mathcal{Y}) & \ge -\cnk \log (\det(X)) + n \cnk \log \left(\cnk\right) \\
                                        & =: \cnk f^{\mathrm{SDP}}(X) +n \cnk \log \left(\cnk\right).
    \end{align*}
\end{lemma}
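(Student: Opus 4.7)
The self-concordance claim is routine: for each $J$, the map $Y_J \mapsto -\log\det Y_J$ is a well-known self-concordant barrier for $\mathbb{S}^k_+$ with parameter $k$, and self-concordance is preserved under taking a Cartesian product (equivalently, under summing functions of disjoint groups of variables). So I would simply state this and move on.

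The main work is the inequality. The plan is to apply Baranyai's theorem (Theorem \ref{Baranyai}) to the complete $k$-uniform hypergraph on $[n]$. Since $k \mid n$, Baranyai produces a partition of $\mathcal{J}$ into $N := \cnk$ color classes $M_1, \ldots, M_N$, each of which is a \emph{perfect matching}: a collection of $n/k$ pairwise disjoint $k$-subsets whose union is $[n]$. The key structural observation is that for a perfect matching $M_c$, the matrices $\{Y_J^{\rightarrow n} : J \in M_c\}$ have disjoint supports covering all of $[n]$, so after a simultaneous permutation of rows and columns the matrix
\[
    W_c \;:=\; \sum_{J \in M_c} Y_J^{\rightarrow n}
\]
is block diagonal with blocks $\{Y_J\}_{J \in M_c}$. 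Hence $\det W_c = \prod_{J \in M_c} \det Y_J$, so
\[
    \log \det W_c \;=\; \sum_{J \in M_c} \log \det Y_J.
\]

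Next, since every $J \in \mathcal{J}$ lies in exactly one color class,
\[
    X \;=\; \Psi(\mathcal{Y}) \;=\; \sum_{J \in \mathcal{J}} Y_J^{\rightarrow n} \;=\; \sum_{c=1}^{N} W_c.
\]
Now I invoke concavity of $\log \det$ on the positive definite cone: for positive definite $W_1, \ldots, W_N$ of size $n \times n$,
\[
    \log \det\!\left( \frac{1}{N}\sum_{c=1}^{N} W_c \right) \;\ge\; \frac{1}{N}\sum_{c=1}^{N} \log \det W_c.
\]
Pulling the scalar $1/N$ out via $\log\det(\alpha M) = n\log\alpha + \log\det M$ yields
\[
    \log \det X \;\ge\; n \log N \;+\; \frac{1}{N}\sum_{c=1}^{N} \log \det W_c \;=\; n \log N \;+\; \frac{1}{N}\sum_{J \in \mathcal{J}} \log \det Y_J.
\]
Multiplying by $-N$ and rearranging gives exactly
\[
    f^{\mathrm{FW}(k)}(\mathcal{Y}) \;=\; -\sum_{J \in \mathcal{J}} \log \det Y_J \;\ge\; -N \log \det X \;+\; n N \log N,
\]
which is the stated bound once we recall $N = \cnk$ and $f^{\mathrm{SDP}}(X) = -\log\det X$.

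The only potential obstacle is verifying that the block-diagonal reduction for a perfect matching is legitimate, i.e., that the supports $J \in M_c$ really partition $[n]$ and that a row/column permutation gives an honest block-diagonal matrix with blocks $Y_J$; this is immediate from the definition of $Y_J^{\rightarrow n}$ in \eqref{Yarrow} and the matching property. Everything else is bookkeeping. The assumption $k \mid n$ is used precisely to invoke Baranyai; without it, one would need a fractional or covering analogue and the constant in front of $f^{\mathrm{SDP}}(X)$ would likely change.
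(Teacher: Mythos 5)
Your proof is correct, and the structural backbone — Baranyai's theorem, the induced perfect-matching decomposition $X=\sum_{c=1}^N W_c$, and the block-diagonal identity $\log\det W_c=\sum_{J\in M_c}\log\det Y_J$ — is exactly what the paper does (the paper packages this decomposition into a standalone auxiliary lemma before proving Lemma~\ref{lemma1}). Where you differ is in the final analytic step: you invoke concavity of the matrix function $\log\det$ on $\mathbb{S}^n_{++}$ directly, via Jensen applied to the uniform average $\tfrac1N\sum_c W_c$, whereas the paper first applies Minkowski's determinant inequality $\det(A+B)^{1/n}\ge\det(A)^{1/n}+\det(B)^{1/n}$ to get $\tfrac1N\det(X)^{1/n}\ge\tfrac1N\sum_c\det(W_c)^{1/n}$, and only then applies concavity of the \emph{scalar} logarithm. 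The two routes yield the identical bound; yours is a hair shorter because it uses a single off-the-shelf concavity fact instead of Minkowski plus scalar Jensen (and indeed Minkowski implies log-concavity of $\det$, so the paper's chain is at least as strong). Your closing observation about where $k\mid n$ enters and why the constant would change without it matches the role Baranyai plays in the paper. One small bookkeeping item you could make explicit, as the paper does, is that $W_c\succ 0$ — immediate here since each $Y_J\succ 0$ and $W_c$ is permutation-similar to a block-diagonal matrix with these blocks, so nothing is missing.
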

Let us emphasize here that $f^{\mathrm{FW}(k)}$ is a self-concordant barrier for $\mathbb{S}^{(n,k)}_+$ not $\mathrm{FW}_n(k)$. Before proving Lemma \ref{lemma1} we need an auxiliary result which extends Lemma A.1 from \cite{Sznaier22} to general values of $k$ such that $k\vert n$. To prove it we will make use of Theorem \ref{Baranyai}.

\begin{lemma}
    Consider the set $\mathcal{Y} = \{ Y_J : J \in \mathcal{J} \}$ consisting of positive definite $k \times k$ matrices and let $X = \Psi(\mathcal{Y}) \in \mathrm{FW}_n(k)$. Then there exists a set of $\cnk$ matrices $Z_i \succ 0$ of size $n \times n$ such that $X = \sum_{i = 1}^{\cnk}Z_i$ and $f^{\mathrm{FW}(k)}(\mathcal{Y})=- \sum_{i=1}^{\cnk} \log(\det(Z_i))$.
\end{lemma}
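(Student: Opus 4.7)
The plan is to construct the $Z_i$'s as block-structured sums of the $Y_J^{\rightarrow n}$'s, grouped according to a proper hyper-edge coloring of the complete $k$-uniform hyper-graph $K_k^n$. Since the standing assumption is $k \mid n$, Baranyai's theorem (Theorem~\ref{Baranyai}) supplies a partition of $\mathcal{J} = \{J \subset [n] : |J|=k\}$ into exactly $\cnk$ color classes $S_1,\dots,S_{\cnk}$, where every $S_i$ is a collection of pairwise disjoint $k$-subsets of $[n]$ whose union is all of $[n]$. In other words, each $S_i$ is a \emph{perfect matching} of $K_k^n$, giving a partition of the vertex set into $n/k$ blocks of size $k$.

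With Baranyai's partition in hand, I would simply define
\[
Z_i \,=\, \sum_{J \in S_i} Y_J^{\rightarrow n} \qquad (i = 1,\dots,\cnk).
\]
Because the sets $J \in S_i$ are pairwise disjoint and together cover $[n]$, the supports (in rows and columns) of the summands $Y_J^{\rightarrow n}$ partition $[n]\times[n]$ into $n/k$ disjoint blocks. Consequently, after a simultaneous row/column permutation that lists the blocks in contiguous order, $Z_i$ becomes block-diagonal with blocks $Y_J$ for $J \in S_i$. Each such block is positive definite by hypothesis, so $Z_i \succ 0$, and since the determinant is invariant under such similarity,
\[
\det(Z_i) \,=\, \prod_{J \in S_i} \det(Y_J), \qquad -\log\det(Z_i) \,=\, -\sum_{J \in S_i} \log\det(Y_J).
\]

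Finally, summing over $i$ and using that $\{S_i\}_{i=1}^{\cnk}$ partitions $\mathcal{J}$ gives on one hand
\[
\sum_{i=1}^{\cnk} Z_i \,=\, \sum_{i=1}^{\cnk} \sum_{J \in S_i} Y_J^{\rightarrow n} \,=\, \sum_{J \in \mathcal{J}} Y_J^{\rightarrow n} \,=\, \Psi(\mathcal{Y}) \,=\, X,
\]
and on the other hand
\[
-\sum_{i=1}^{\cnk}\log\det(Z_i) \,=\, -\sum_{J \in \mathcal{J}} \log\det(Y_J) \,=\, f^{\mathrm{FW}(k)}(\mathcal{Y}),
\]
which is exactly the required identity. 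The only non-routine ingredient is Baranyai's theorem; once the partition into perfect matchings is invoked, the block-diagonal structure makes the positivity, the multiplicativity of the determinants, and the telescoping of the logarithms all automatic. The assumption $k \mid n$ is essential, because it is precisely what guarantees that the perfect matchings (and hence the partition into $\cnk$ color classes) exist in the first place.
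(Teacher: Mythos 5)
Your proof is correct and follows essentially the same argument as the paper: invoke Baranyai's theorem (using $k\mid n$) to partition $\mathcal{J}$ into $\cnk$ perfect matchings, form $Z_i = \sum_{J\in S_i} Y_J^{\rightarrow n}$, observe each $Z_i$ is permutation-similar to a block-diagonal matrix with blocks $Y_J$, and telescope the log-determinants. The only cosmetic difference is that you deduce $Z_i\succ 0$ directly from the positive definiteness of the blocks, whereas the paper argues it from finiteness of $f^{\mathrm{FW}(k)}(\mathcal{Y})$; your version is a touch more direct.
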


\begin{proof}
    Let $K^n_k$ be the complete $k$-uniform hyper-graph on $n$ vertices. We can identify each hyper-edge $\{i_1,i_2,\dots, i_k\}\subset [n]$ in $K^n_k$ with exactly one element $Y_J \in \mathcal{Y}$, namely the one where $\{i_1,i_2,\dots, i_k\} =J$. Let $\{ S_1, \dots, S_{\cnk} \}$ be a hype-edge coloring of $K^n_k$. Define $\mathcal{Y}_i := \{Y_J : J \in S_i \}$ and set $Z_i := \Psi(\mathcal{Y}_i)$. Then $X = \sum_{i=1}^{\cnk}Z_i$ since $S_i \cap S_j = \emptyset$ for $i \neq j$ and $\cup_{i} S_i = \mathcal{J}$. Since $f^{\mathrm{FW}(k)}(\mathcal{Y})$ is finite, we know that $Z_i \succ 0$. Moreover, since each $S_i$ induces a perfect matching, there exists a permutation matrix $P_i$ for every $i = 1, \dots, \cnk$ such that $P_i Z_i P_i^T$ is a block-diagonal matrix with blocks $Y_J$ on the diagonal for $J \in S_i$. From this we find
    \[
        \log(\det(Z_i)) = \log(\det(P_i Z_i P_i^T)) = \sum_{J \in S_i}\log(\det(Y_J)).
    \]
    Hence,
    \begin{align*}
        \sum_{i=1}^{\cnk}\log(\det(Z_i)) &=  \sum_{i=1}^{\cnk} \sum_{J \in S_i}\log(\det(Y_J)) \\ 
        &= \sum_{J \in \mathcal{J}} \log(\det(Y_J)) = -f^{\mathrm{FW}(k)}(\mathcal{Y}),
    \end{align*}
    completing the proof.
\end{proof}

We continue to prove Lemma \ref{lemma1}. In the proof we use Minkowski's determinant inequality, which we restate for convenience.

\begin{theorem}(Minkowski's determinant inequality, see, e.g. \cite[Theorem 4.1.8]{Marcus64})
    Let $A,B \in \mathbb{S}^n_+$. Then
    \begin{equation}\label{detinequ}
        \left(\det(A+B)\right)^{\frac{1}{n}} \ge \det(A)^\frac{1}{n}+\det(B)^\frac{1}{n}.
    \end{equation}
\end{theorem}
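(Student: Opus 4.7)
The plan is to reduce the matrix inequality to a scalar inequality about eigenvalues via simultaneous diagonalization, and then prove that scalar inequality using two applications of the arithmetic–geometric mean inequality. First I would handle the degenerate case by continuity: if either $A$ or $B$ is only positive semidefinite, I would replace it by $A+\epsilon I$ (resp.\ $B+\epsilon I$), prove the strict case, and let $\epsilon \downarrow 0$; both sides of \eqref{detinequ} depend continuously on the entries of $A$ and $B$, so the inequality passes to the limit.

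Assume therefore $A \succ 0$. I would factor
\[
A+B \;=\; A^{1/2}\bigl(I + A^{-1/2} B A^{-1/2}\bigr) A^{1/2},
\]
and set $C := A^{-1/2} B A^{-1/2} \succeq 0$. Taking determinants and $n$-th roots, the target inequality \eqref{detinequ} becomes
\[
\det(A)^{1/n}\det(I+C)^{1/n} \;\ge\; \det(A)^{1/n} + \det(A)^{1/n}\det(C)^{1/n},
\]
since $\det(B) = \det(A)\det(C)$. Dividing by $\det(A)^{1/n}>0$, it suffices to prove
\[
\det(I+C)^{1/n} \;\ge\; 1 + \det(C)^{1/n}
\qquad \text{for all } C \succeq 0.
\]

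Next I would diagonalize: let $\lambda_1,\dots,\lambda_n \ge 0$ be the eigenvalues of $C$, so that $\det(I+C) = \prod_{i=1}^n(1+\lambda_i)$ and $\det(C) = \prod_{i=1}^n \lambda_i$. The reduced inequality is then the purely scalar statement
\[
\Bigl(\prod_{i=1}^n (1+\lambda_i)\Bigr)^{1/n} \;\ge\; 1 + \Bigl(\prod_{i=1}^n \lambda_i\Bigr)^{1/n}.
\]
I would prove this by a clean double application of AM--GM. For any $i$, apply AM--GM to the $n$ positive numbers $\tfrac{1}{1+\lambda_j}$ and to the $n$ numbers $\tfrac{\lambda_j}{1+\lambda_j}$:
\[
\prod_{j=1}^n \Bigl(\tfrac{1}{1+\lambda_j}\Bigr)^{1/n} \le \tfrac{1}{n}\sum_{j=1}^n \tfrac{1}{1+\lambda_j},
\qquad
\prod_{j=1}^n \Bigl(\tfrac{\lambda_j}{1+\lambda_j}\Bigr)^{1/n} \le \tfrac{1}{n}\sum_{j=1}^n \tfrac{\lambda_j}{1+\lambda_j}.
\]
Adding these bounds and noting $\tfrac{1}{1+\lambda_j} + \tfrac{\lambda_j}{1+\lambda_j} = 1$, the right-hand sides sum to $1$, and multiplying through by $\prod_j(1+\lambda_j)^{1/n}$ yields the scalar inequality. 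The only real care point is the treatment of the case where some $\lambda_i$ vanishes (so that $\det(C)=0$), for which the inequality is trivial; otherwise all quantities above are positive and the manipulations are valid. I expect no serious obstacle — the main thing is to set the reduction up carefully so that the AM--GM step falls out in one line.
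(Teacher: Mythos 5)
Your proof is correct and complete. Note that the paper does not prove this statement at all: it is a classical result restated for convenience and cited from \cite{Marcus64}, so there is no in-paper argument to compare against. Your route --- perturbing $A$ to $A+\epsilon I$ to reduce to the positive definite case, factoring $A+B=A^{1/2}\bigl(I+A^{-1/2}BA^{-1/2}\bigr)A^{1/2}$ so that the claim becomes $\det(I+C)^{1/n}\ge 1+\det(C)^{1/n}$ for $C\succeq 0$, diagonalizing, and applying AM--GM to the two families $\tfrac{1}{1+\lambda_j}$ and $\tfrac{\lambda_j}{1+\lambda_j}$ whose termwise sums equal $1$ --- is essentially the standard textbook proof of Minkowski's determinant inequality, and every step checks out, including the degenerate cases (a vanishing eigenvalue makes the scalar inequality trivial, and the limit $\epsilon\downarrow 0$ is justified by continuity of both sides in the entries). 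The only cosmetic slip is the phrase ``for any $i$'' before the AM--GM step, where no index $i$ actually plays a role.
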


\begin{proof}{(Lemma \ref{lemma1})}
    The self-concordance of $f^{\mathrm{FW}(k)}$ on $\mathrm{int}\left(\mathbb{S}^{(n,k)_+} \right)$ follows immediately from the self-concordance of $-\log \det(X)$ on $\mathrm{int}\left(\mathbb{S}^n\right)$. By assumption $X = \Psi(\mathcal{Y}) = \sum_{i=1}^{\cnk}Z_i \in \mathrm{FW}_n(k)$. Therefore,
    \begin{equation*}
            \frac{1}{\cnk}\det(X)^{1/n}  \ge \frac{1}{\cnk}\sum_{i=1}^{\cnk}\det(Z_i)^{1/n},
    \end{equation*}
    where the inequality follows from Minkowski's determinant inequality \eqref{detinequ}. Applying the logarithm on both sides and rearranging the left-hand-side yields
    \begin{align*}
        \frac{1}{n}\log(\det(X))-\log\left(\cnk\right) &\ge \log \left( \frac{1}{\cnk}\sum_{i=1}^{\cnk}\det(Z_i)^{1/n}\right).
    \end{align*}
    Using the fact that the logarithm is concave we see
    \begin{align*}
        \frac{1}{n}\log(\det(X))-\log(\cnk) &\ge  \frac{1}{\cnk}\sum_{i=1}^{\cnk}\frac{1}{n} \log \left(\det(Z_i)\right).
    \end{align*}
    Multiplying by $n\cnk$ leads to
    \begin{align*}
        -\cnk\left(f^{\mathrm{SDP}}(X)+n \log\left(\cnk\right)\right) &=\cnk\left(\log(\det(X))-n \log\left(\cnk\right)\right) \\
        &\ge \sum_{i=1}^{\cnk}\log(\det(Z_i))= -f^{\mathrm{FW}(k)}(\mathcal{Y}).
    \end{align*}
\end{proof}

The following corollary is analogous to Corollary 4.5 from \cite{Sznaier22}.
\begin{corollary}\label{corr1iterativeScheme}
    If
    \[
        \mathcal{Y}_0 = \{ Y_J = 1/\cnk I_{k \times k} : J \subset [n], \vert J \vert = k \}
    \]
    then $X=\Psi(\mathcal{Y}_0)=I$ and
    \begin{align*}
        f^{\mathrm{FW}(k)}(\mathcal{Y}_0)& =\cnk f^{\mathrm{SDP}}(X)+n \cnk\log\left(\cnk\right) \\
        &=n \cnk\log\left(\cnk\right).
    \end{align*}
\end{corollary}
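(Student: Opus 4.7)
The plan is to verify the two claims by direct computation, with Lemma \ref{lemma1} providing the structural context (and showing the Minkowski bound is tight here).

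First I would establish $\Psi(\mathcal{Y}_0) = I$ by computing entries. For a diagonal position $(i,i)$, every subset $J \in \mathcal{J}$ containing $i$ contributes the corresponding diagonal entry of $Y_J = (1/\cnk) I_{k\times k}$, namely $1/\cnk$; the number of such $J$ is $\binom{n-1}{k-1} = \cnk$, so $(\Psi(\mathcal{Y}_0))_{i,i} = \cnk \cdot (1/\cnk) = 1$. For an off-diagonal position $(i,j)$ with $i \neq j$, any contributing $Y_J^{\rightarrow n}$ picks out an off-diagonal entry of $Y_J$, which is zero since $Y_J$ is a scalar multiple of the identity. Hence $\Psi(\mathcal{Y}_0) = I$.

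Next I would evaluate $f^{\mathrm{FW}(k)}(\mathcal{Y}_0)$ directly. Since $\det\bigl((1/\cnk) I_{k\times k}\bigr) = (1/\cnk)^k$, each term contributes $-\log(1/\cnk)^k = k\log \cnk$, and there are $\binom{n}{k}$ such terms, giving
\[
f^{\mathrm{FW}(k)}(\mathcal{Y}_0) = k \binom{n}{k} \log \cnk.
\]
The combinatorial identity $k\binom{n}{k} = n\binom{n-1}{k-1} = n\cnk$ then yields $f^{\mathrm{FW}(k)}(\mathcal{Y}_0) = n\cnk \log \cnk$.

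Finally, since $X = I$ implies $f^{\mathrm{SDP}}(I) = -\log\det(I) = 0$, the first displayed equality $\cnk f^{\mathrm{SDP}}(X) + n\cnk\log \cnk = n\cnk\log\cnk$ is immediate, and is consistent with the lower bound from Lemma \ref{lemma1}. There is no real obstacle; the only mildly non-obvious step is the binomial identity $k\binom{n}{k} = n\cnk$, which is what reconciles the $\binom{n}{k}$-fold sum in the definition of $f^{\mathrm{FW}(k)}$ with the $n\cnk$ prefactor appearing in the statement.
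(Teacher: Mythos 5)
Your proposal is correct and follows essentially the same computation as the paper: the count $\binom{n-1}{k-1} = \cnk$ of size-$k$ subsets containing a fixed $i$ to get $\Psi(\mathcal{Y}_0) = I$, the direct evaluation $\log\det\bigl((1/\cnk)I_{k\times k}\bigr) = -k\log\cnk$, and the identity $k\binom{n}{k} = n\cnk$. The only difference is that you also spell out why the off-diagonal entries of $\Psi(\mathcal{Y}_0)$ vanish, which the paper leaves implicit.
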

\begin{proof}
    The first statement follows when noting that each $i \in [n]$ lies in exactly $\binom{n-1}{k-1}$ subsets of $[n]$ of size $k$. The reason is that when fixing $i$, there are $n-1$ elements left out of which we want to choose $k-1$ more elements to make a set of size $k$. For the second statement note that
    \[
        \log\left( \det \left( \frac{1}{\cnk}I_{k \times k}\right) \right) =\log \left(\left(\cnk\right)^{-k} \right) = -k \log\left(\cnk\right).
    \]
    The result follows when noting that $k \binom{n}{k} = n \cnk$.
\end{proof}

\section{Relations of the barrier functions}\label{sec:relating_barriers}
To prove convergence of our algorithm we need two essential ingredients. First, we need to prove that the predictor step reduces the current objective value sufficiently, and secondly, we must prove that the corrector step converges to a point close to the central path. Moreover, we have to show that our criterion to decide which subroutine to call is valid. The issue here is that we compute the Newton decrement of $f^{\mathrm{FW}(k)}$ at $\mathcal{Y}_0$, but we need to be able to assert that the Newton decrement of $f^{\mathrm{SDP}}$ at $X_\ell$ is small enough.

The next result we present will allow us to lower bound the progress made by the corrector step. For this we need to be able to compare the barrier functions for $\mathbb{S}^n_+$ and $\mathbb{S}^{(n,k)}_+$.
We assume we have a given feasible solution $X_\ell$ such that $\langle A^{(\ell)}_0, I \rangle = v$. Define the vector $b(v) := (v, b_1, \ldots, b_m)^T$. For further reference, consider
\begin{equation}\label{SDPipm}  
        \mathrm{min} \; \left\{ f^{\mathrm{SDP}}(X) :  \langle A^{(\ell)}_i, X \rangle = b(v)_i \; \forall i = 0, 1, \ldots,m,  X \in \mathbb{S}^n_+\right\},
\end{equation}

which we would like to compare to

\begin{equation}\label{reducedSDP}
       \min \; \left\{ f^{\mathrm{FW}(k)}(\mathcal{Y}) : \mathcal{Y} \in L_{\ell}^{\Psi}(v) \cap \mathbb{S}^{(n,k)}_+ \right\}.
\end{equation}
Suppose $\mathcal{Y}^\ast$ is an approximate solution to \eqref{reducedSDP}. Defining
\[
    X_{\ell+1} = X_{\ell}^{1/2} \Psi(\mathcal{Y}^\ast) X_{\ell}^{1/2},
\]
we find that $X_{\ell} \in \mathcal{F}_{\mathrm{SDP}}$ for all $\ell$. In other words, the points $X_\ell$ we obtain via this procedure are all feasible for the original SDP \eqref{IterativeSchemeprimal}. The following lemma allows us to lower bound the decrease achieved by one corrector step in terms of an element in $\mathbb{S}^{(n,k)}_+$.

\begin{lemma}\label{lemma:comp_values}
    Let $\mathcal{Y}^\ast$ be a feasible solution to \eqref{reducedSDP} and
     $\mathcal{Y}_0$ as in \eqref{Y0}. Further, let  $X_{\ell+1} = X_{\ell}^{1/2}\Psi(\mathcal{Y}^\ast)X_{\ell}^{1/2}$ for $X_\ell$ a feasible solution. Then
    \[
        \cnk \left( f^{\mathrm{SDP}}(X_{\ell})-f^{\mathrm{SDP}}(X_{\ell+1}) \right) \ge f^{\mathrm{FW}(k)}(\mathcal{Y}_{0})-f^{\mathrm{FW}(k)}(\mathcal{Y}^\ast).
    \]
\end{lemma}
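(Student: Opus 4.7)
The approach is to compute both sides of the desired inequality in terms of $f^{\mathrm{SDP}}(\Psi(\mathcal{Y}^\ast))$ and then invoke Lemma \ref{lemma1} and Corollary \ref{corr1iterativeScheme}. I expect this to go through cleanly, as the multiplicativity of the determinant under similarity makes the left-hand side collapse to exactly the bound produced by the two cited results.

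First I would handle the left-hand side. Since $X_{\ell+1} = X_\ell^{1/2}\Psi(\mathcal{Y}^\ast) X_\ell^{1/2}$, the multiplicative property of the determinant gives $\det(X_{\ell+1}) = \det(X_\ell)\det(\Psi(\mathcal{Y}^\ast))$, hence
\[
f^{\mathrm{SDP}}(X_{\ell+1}) = f^{\mathrm{SDP}}(X_\ell) + f^{\mathrm{SDP}}(\Psi(\mathcal{Y}^\ast)).
\]
Therefore $f^{\mathrm{SDP}}(X_\ell) - f^{\mathrm{SDP}}(X_{\ell+1}) = -f^{\mathrm{SDP}}(\Psi(\mathcal{Y}^\ast))$, and the left-hand side of the claim is simply $-\cnk\, f^{\mathrm{SDP}}(\Psi(\mathcal{Y}^\ast))$.

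Next I would tackle the right-hand side. By Corollary \ref{corr1iterativeScheme}, $f^{\mathrm{FW}(k)}(\mathcal{Y}_0) = n\cnk \log(\cnk)$. By Lemma \ref{lemma1} applied to $\mathcal{Y}^\ast$ with $X = \Psi(\mathcal{Y}^\ast)$,
\[
f^{\mathrm{FW}(k)}(\mathcal{Y}^\ast) \ge \cnk\, f^{\mathrm{SDP}}(\Psi(\mathcal{Y}^\ast)) + n\cnk \log(\cnk).
\]
Subtracting these two, the logarithmic constants cancel and one obtains
\[
f^{\mathrm{FW}(k)}(\mathcal{Y}_0) - f^{\mathrm{FW}(k)}(\mathcal{Y}^\ast) \le -\cnk\, f^{\mathrm{SDP}}(\Psi(\mathcal{Y}^\ast)),
\]
which equals the left-hand side computed in the previous step. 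Combining both gives the claim.

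The proof is essentially a two-line bookkeeping argument once Lemma \ref{lemma1} and Corollary \ref{corr1iterativeScheme} are in hand, so there is no real obstacle. The only subtlety worth mentioning is that one must verify that the hypotheses of Lemma \ref{lemma1} apply, i.e.\ that $\mathcal{Y}^\ast$ lies in the interior of $\mathbb{S}^{(n,k)}_+$; this is implicit in calling $\mathcal{Y}^\ast$ a feasible solution to \eqref{reducedSDP}, whose objective $f^{\mathrm{FW}(k)}$ is only finite there.
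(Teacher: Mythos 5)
Your proof is correct and follows essentially the same route as the paper: rewrite $f^{\mathrm{SDP}}(X_\ell) - f^{\mathrm{SDP}}(X_{\ell+1})$ as $-f^{\mathrm{SDP}}(\Psi(\mathcal{Y}^\ast))$ via multiplicativity of the determinant, then invoke Corollary \ref{corr1iterativeScheme} for $f^{\mathrm{FW}(k)}(\mathcal{Y}_0)$ and Lemma \ref{lemma1} for the bound on $f^{\mathrm{FW}(k)}(\mathcal{Y}^\ast)$. Your version is just a more fully spelled-out rendering of the paper's compressed underbrace computation, and your closing remark about $\mathcal{Y}^\ast$ lying in the interior so that Lemma \ref{lemma1} applies is a reasonable extra precaution.
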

\begin{proof}
    The proof follows immediately when noting that
    \begin{align*}
        \cnk \Big( f^{\mathrm{SDP}}(X_{\ell})-&f^{\mathrm{SDP}}(X_{\ell+1}))  \Big)  = \cnk \left( f^{\mathrm{SDP}}(X_{\ell}) -f^{\mathrm{SDP}}( X_{\ell}^{1/2} \Psi(\mathcal{Y}^\ast)X_{\ell}^{1/2} \right) \\
         &= \underbrace{n \cnk \log(\cnk)}_{= f^{\mathrm{FW}(k)}(\mathcal{Y}_0) \text{ by Cor. \ref{corr1iterativeScheme}}} \underbrace{-f^{\mathrm{SDP}}(\Psi(\mathcal{Y}^\ast))-n \cnk \log \cnk}_{\ge -f^{\mathrm{FW}(k)}(\mathcal{Y}^\ast) \text{ by Lemma \ref{lemma1}}}
    \end{align*}
\end{proof}

\subsection{Relation of the Newton decrements}

In this subsection we will prove that we can upper bound the Newton decrement of $f^{\mathrm{SDP}}$ at the identity in terms of the Newton decrement of $f^{\mathrm{FW}(k)}$ at $\mathcal{Y}_0$. We now define the following operator

\[
    \Psi^{\dagger} : \mathbb{S}^{n} \rightarrow \mathbb{S}^{(n,k)}
\]
via
\[
    \left( \Psi^{\dagger}(X) \right)_J = \left( \frac{1}{\cnk} I +\frac{1}{\dnk}(ee^T-I)\right) \circ X_{J,J} \quad \text{ for } J \subset [n], \vert J \vert = k,
\]
where $\circ$ denotes the Hadamard product. See Figure \ref*{fig:surj} for a visualization of the surjection from $\mathbb{S}^{(n,k)}_+$ to $\mathrm{FW}_n(k)$.

\begin{figure}[h!]
    \includegraphics[width=0.9\textwidth]{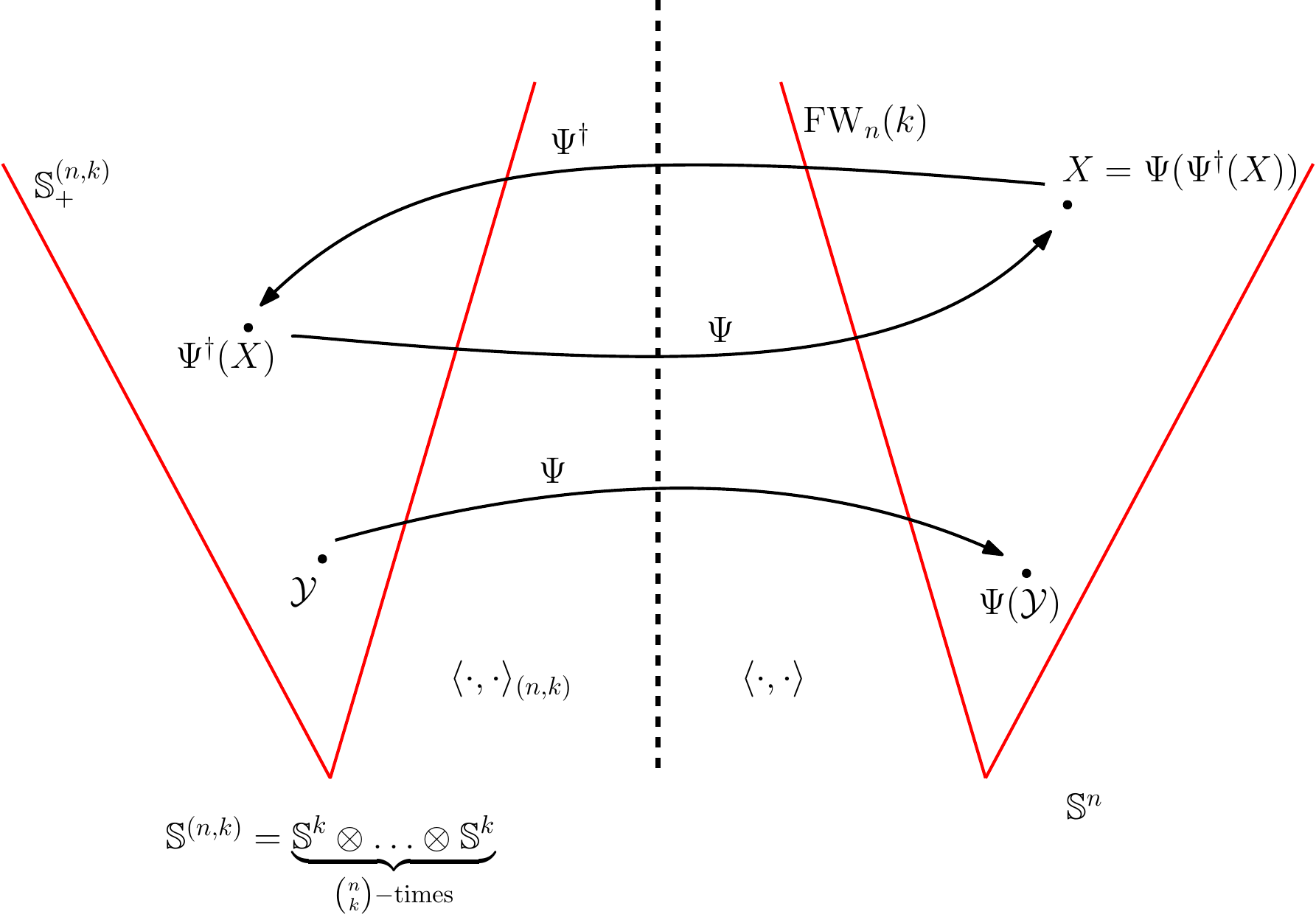}
    \caption{Visualization the surjection from $\mathbb{S}^{(n,k)}_+$ to $\mathrm{FW}_n(k)$}\label{fig:surj}
\end{figure}

This operator satisfies

\[
    \Psi(\Psi^{\dagger}(X)) = X \text{ for all } X \in \mathbb{S}^n.
\]

An inner product on $\mathbb{S}^{(n,k)}$ given by
\[
    \langle \mathcal{X}, \mathcal{Y} \rangle_{(n,k)} := \sum_{\vert J \vert =k} \langle X_J, Y_J \rangle,
\]
and it is well-defined for $\mathcal{X} = \{ X_J \in \mathbb{S}^k : \vert J \vert = k\}, \mathcal{Y} = \{ Y_J \in \mathbb{S}^k : \vert J \vert = k\}$.
It is straightforward to verify the following relation between the two norms.
\begin{lemma}\label{lem:norm}
    For any $X \in \mathbb{S}^n$ we have
    \[
        \vert \vert \Psi^\dagger(X) \vert \vert_{(n,k)} \le \vert \vert X \vert \vert. 
    \]
\end{lemma}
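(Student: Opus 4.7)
The claim reduces to a direct counting argument once the definition of $\Psi^\dagger$ is unpacked. First I would note that the matrix $\frac{1}{\cnk}I + \frac{1}{\dnk}(ee^T - I)$ has diagonal entries equal to $\frac{1}{\cnk}$ and off-diagonal entries equal to $\frac{1}{\dnk}$, so for each $J$ with $|J| = k$,
\[
\| \Psi^\dagger(X)_J \|^2 \;=\; \frac{1}{\cnk^{2}}\sum_{i \in J} X_{ii}^{\,2} \;+\; \frac{1}{\dnk^{2}}\sum_{\substack{i,j \in J \\ i \neq j}} X_{ij}^{\,2}.
\]

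Next I would sum this expression over all $J \subset [n]$ with $|J| = k$ and swap the order of summation. Each diagonal index $i$ belongs to exactly $\binom{n-1}{k-1} = \cnk$ such subsets, and each off-diagonal pair $\{i,j\}$ belongs to exactly $\binom{n-2}{k-2} = \dnk$ such subsets. Therefore
\[
\| \Psi^\dagger(X)\|_{(n,k)}^{2} \;=\; \frac{\cnk}{\cnk^{2}}\sum_{i} X_{ii}^{\,2} \;+\; \frac{\dnk}{\dnk^{2}}\sum_{i \neq j} X_{ij}^{\,2} \;=\; \frac{1}{\cnk}\sum_{i} X_{ii}^{\,2} + \frac{1}{\dnk}\sum_{i \neq j} X_{ij}^{\,2}.
\]

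Finally, since $2 \le k \le n$ we have $\cnk \ge 1$ and $\dnk \ge 1$, so both prefactors are at most $1$, yielding
\[
\| \Psi^\dagger(X) \|_{(n,k)}^{2} \;\le\; \sum_{i} X_{ii}^{\,2} + \sum_{i \neq j} X_{ij}^{\,2} \;=\; \|X\|^{2},
\]
which is the required inequality. There is no real obstacle here; the only thing to be careful about is that the coefficients $\frac{1}{\cnk^2}$ and $\frac{1}{\dnk^2}$ are multiplied by the counting factors $\cnk$ and $\dnk$ respectively, producing exactly the reciprocals used to bound the contribution of each entry of $X$.
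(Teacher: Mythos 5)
Your proof is correct, and it is exactly the ``straightforward verification'' the paper alludes to without spelling out (the paper states Lemma \ref{lem:norm} with no proof). The key observations — that each diagonal index lies in $\cnk = \binom{n-1}{k-1}$ of the $k$-subsets and each off-diagonal pair in $\dnk = \binom{n-2}{k-2}$ of them, so the $\frac{1}{\cnk^2}$ and $\frac{1}{\dnk^2}$ weights are only partially cancelled, leaving prefactors $\frac{1}{\cnk}, \frac{1}{\dnk} \le 1$ — are precisely the right ones, and the final inequality follows immediately from $k \ge 2$, $k \le n$.
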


Suppose now $X_\ell$ is a feasible solution to \eqref{relaxell} such that $\langle A_0, X_\ell \rangle = v$. We define the vector $b(v) := (v, b_1, \ldots, b_m)^T$ as well as the two subspaces
\begin{equation*}
    L^{\Psi}_{\ell}  = \{ \mathcal{Y} \in \mathbb{S}^{(n,k)} : (\mathcal{A}^{(\ell)} \circ \Psi) (\mathcal{Y}) = b\}
\end{equation*}
and
\begin{equation*}
    L_\ell = \{ X \in \mathbb{S}^n : \mathcal{A}^{(\ell)}_0(X) = b \}.
\end{equation*}
 Note that we may also add an equality for the objective, in which case we will refer to the following operator
 \[
\mathcal{A}_0^{(\ell)}(X) = (\langle A^{(\ell)}_0, X\rangle ,\langle A^{(\ell)}_1,X\rangle ,\ldots,\langle A^{(\ell)}_m, X\rangle) \in \mathbb{R}^{m+1}.
\]
The respective subspaces will be denoted as follows
\begin{equation}\label{Lpsiellv}
    L^{\Psi}_{\ell}(v)  = \{ \mathcal{Y} \in \mathbb{S}^{(n,k)} : (\mathcal{A}_0^{(\ell)} \circ \Psi) (\mathcal{Y}) = b(v)\}
\end{equation}
and
\begin{equation}\label{Lellv}
    L_\ell(v) = \{ X \in \mathbb{S}^n : \mathcal{A}^{(\ell)}_0(X) = b(v)\}.
\end{equation}
When we consider the subspaces defined via the operator with respect to the initial data matrices, we omit the subscript $\ell$, e.g.,
\[
    L^{\Psi} = \{ \mathcal{Y} \in \mathbb{S}^{(n,k)} : \langle A_i, \Psi(\mathcal{Y}) \rangle = b_i \, , \, \forall i \in [m] \}.
\]

The following lemma corresponds to Lemma A.2 in \cite{Sznaier22}, and allows us to bound the Newton decrement of $f^{\mathrm{SDP}}_{\vert L}$ in terms of $f^{\mathrm{FW}(k)}_{\vert L}$.

\begin{lemma}\label{lemma:comp_decrements}
Assume $\mathcal{Y}_0 \in L^{\Psi}$ and $I \in L$.
At $\mathcal{Y}_0$ one has
\[
\Delta \left({f^{\mathrm{FW}(k)}_{\vert _{L^{\Psi}}}},\mathcal{Y}_0\right) \ge \frac{\Delta\left(\cnk f^{\mathrm{SDP}}_{\vert _L},I\right)}{\sqrt{\cnk}} = \sqrt{\cnk}\Delta\left(f^{\mathrm{SDP}}_{\vert_L},I\right).
\]
\end{lemma}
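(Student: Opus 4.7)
The plan is to write both Newton decrements via the variational formula \eqref{varNorm}, and then lift admissible directions from the SDP side to the factor--width side using $\Psi^{\dagger}$ as a right inverse of $\Psi$.

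First I would compute the local data at $\mathcal{Y}_0$. Since $f^{\mathrm{FW}(k)}$ separates over the $\binom{n}{k}$ blocks and every block of $\mathcal{Y}_0$ equals $\cnk^{-1}I_k$, the standard derivatives of $-\log\det$ give blockwise $g(\mathcal{Y}_0)_J=-\cnk\,I_k$ and $H(\mathcal{Y}_0)[\mathcal{V}]_J=\cnk^{2}\,V_J$. Hence $n(\mathcal{Y}_0)=\mathcal{Y}_0$, the local norm satisfies $\|\mathcal{V}\|_{\mathcal{Y}_0}=\cnk\,\|\mathcal{V}\|_{(n,k)}$, and $\langle \mathcal{D},n(\mathcal{Y}_0)\rangle_{\mathcal{Y}_0}=\cnk\sum_{J}\mathrm{tr}(D_J)=\cnk\,\mathrm{tr}(\Psi(\mathcal{D}))$. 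The $\cnk$-factors cancel in the quotient of \eqref{varNorm}, producing
\[
\Delta\!\left(f^{\mathrm{FW}(k)}_{\vert L^{\Psi}},\mathcal{Y}_0\right)=\sup_{0\neq \mathcal{D}\in\mathrm{null}(\mathcal{A}\circ\Psi)}\frac{\mathrm{tr}(\Psi(\mathcal{D}))}{\|\mathcal{D}\|_{(n,k)}}.
\]
The same computation at $I$ for $f^{\mathrm{SDP}}$ (with $g(I)=-I$, $H(I)=\mathrm{Id}$, $n(I)=I$) yields
\[
\Delta\!\left(f^{\mathrm{SDP}}_{\vert L},I\right)=\sup_{0\neq D\in\mathrm{null}(\mathcal{A})}\frac{\mathrm{tr}(D)}{\|D\|}.
\]

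The lifting is the crux. For any $D\in\mathrm{null}(\mathcal{A})$ set $\mathcal{D}:=\Psi^{\dagger}(D)$. Because $\Psi\circ\Psi^{\dagger}$ is the identity on $\mathbb{S}^n$, we have $\Psi(\mathcal{D})=D$, hence $(\mathcal{A}\circ\Psi)(\mathcal{D})=\mathcal{A}(D)=0$, so $\mathcal{D}$ is admissible in the first supremum above. Lemma \ref{lem:norm} further gives $\|\mathcal{D}\|_{(n,k)}\le\|D\|$. Flipping the sign of $D$ if necessary so $\mathrm{tr}(D)\ge 0$, we obtain
\[
\Delta\!\left(f^{\mathrm{FW}(k)}_{\vert L^{\Psi}},\mathcal{Y}_0\right)\;\ge\;\frac{\mathrm{tr}(D)}{\|\Psi^{\dagger}(D)\|_{(n,k)}}\;\ge\;\frac{\mathrm{tr}(D)}{\|D\|},
\]
and taking the supremum over $D\in\mathrm{null}(\mathcal{A})$ delivers the inequality part of the claim. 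The equality on the right-hand side follows at once from the scaling identity $\Delta(\alpha f_{\vert L},x)=\sqrt{\alpha}\,\Delta(f_{\vert L},x)$ for $\alpha>0$, which is immediate from the definitions: under scaling by $\alpha$ the gradient and Hessian each scale by $\alpha$, so $g^\top H^{-1}g$ scales by $\alpha$ and its square root by $\sqrt{\alpha}$.

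The main obstacle is the careful bookkeeping of the local inner products at $\mathcal{Y}_0$: the block Hessian introduces a factor of $\cnk^2$ and the Newton step introduces a $\cnk^{-1}$, and one must verify that these combine so that the quotient in \eqref{varNorm} collapses to the clean form $\mathrm{tr}(\Psi(\mathcal{D}))/\|\mathcal{D}\|_{(n,k)}$. Apart from this, the argument is a standard variational lift made possible by two ingredients: $\Psi^{\dagger}$ is a right inverse of $\Psi$, which guarantees feasibility of the lifted direction, and Lemma \ref{lem:norm} controls the norm distortion incurred by the lift.
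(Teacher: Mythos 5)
Your derivation of the inequality is correct and follows essentially the same route as the paper's proof: compute the blockwise gradient and Hessian at $\mathcal{Y}_0$ to collapse the variational quotient \eqref{varNorm} to $\mathrm{tr}(\Psi(\mathcal{D}))/\|\mathcal{D}\|_{(n,k)}$, then lift via $\Psi^{\dagger}$ and invoke Lemma~\ref{lem:norm}. The paper selects the single direction $d=\Psi^{\dagger}\bigl(n^{\mathrm{SDP}}_L(I)\bigr)$ rather than a supremum over $\mathrm{null}(\mathcal{A})$, but the supremum is attained at a positive multiple of the Newton step, so the two are equivalent. This gives
\[
\Delta\left(f^{\mathrm{FW}(k)}_{\vert L^{\Psi}},\mathcal{Y}_0\right)\ge \Delta\left(f^{\mathrm{SDP}}_{\vert L},I\right)=\frac{1}{\sqrt{\cnk}}\,\Delta\left(\cnk f^{\mathrm{SDP}}_{\vert L},I\right),
\]
which is the first inequality in the lemma.

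Your final sentence, however, is wrong, and you should not have signed off on the displayed equality. The scaling identity $\Delta(\alpha f_{\vert L},x)=\sqrt{\alpha}\,\Delta(f_{\vert L},x)$ that you correctly record gives $\cnk^{-1/2}\Delta\bigl(\cnk f^{\mathrm{SDP}}_{\vert L},I\bigr)=\Delta\bigl(f^{\mathrm{SDP}}_{\vert L},I\bigr)$, \emph{not} $\sqrt{\cnk}\,\Delta\bigl(f^{\mathrm{SDP}}_{\vert L},I\bigr)$; the equality printed in the lemma is off by a factor of $\cnk$ unless $\cnk=1$. Moreover the stronger bound $\Delta\bigl(f^{\mathrm{FW}(k)}_{\vert L^{\Psi}},\mathcal{Y}_0\bigr)\ge\sqrt{\cnk}\,\Delta\bigl(f^{\mathrm{SDP}}_{\vert L},I\bigr)$ is false in general, because Lemma~\ref{lem:norm} cannot be sharpened to $\|\Psi^{\dagger}(D)\|_{(n,k)}\le \cnk^{-1/2}\|D\|$: one computes $\|\Psi^{\dagger}(D)\|^2_{(n,k)}=\cnk^{-1}\sum_i D_{ii}^2+\dnk^{-1}\sum_{i\neq j}D_{ij}^2$, and $\dnk<\cnk$ for $k<n$, so off-diagonal entries of $D$ are penalized too lightly. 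The only bound actually used downstream (in Lemma~\ref{lemma_closeEnough}) is $\Delta\bigl(f^{\mathrm{FW}(k)}_{\vert L^{\Psi}},\mathcal{Y}_0\bigr)\ge\Delta\bigl(f^{\mathrm{SDP}}_{\vert L},I\bigr)$, which your argument establishes correctly.
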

\begin{proof}
    Following \eqref{varNorm} we have

    \[
        \Delta\left(f^{\mathrm{FW}(k)}_{\vert_{L^\Psi}}, \mathcal{Y}\right) \ge \frac{\langle d, n^{\mathrm{FW}}(\mathcal{Y}) \rangle_{(n,k),\mathcal{Y}}}{\vert \vert d \vert \vert _{(n,k),\mathcal{Y}}} \text{ for all }  d \in L \setminus \{ 0\}.
    \]

    Choosing $d = \Psi^{\dagger}(n^{\mathrm{SDP}}_L(X)) \in L$ leads to
    \begin{align*}
        \Delta\left(f^{\mathrm{FW}(k)}_{\vert _{L^\Psi}}, \mathcal{Y}\right) & \ge \frac{\langle \Psi^\dagger(n^{\mathrm{SDP}}_L(X)),n^{\mathrm{FW}}(\mathcal{Y}) \rangle_{{(n,k),\mathcal{Y}}}}{\vert \vert \Psi^\dagger(n^{\mathrm{SDP}}_L(X))\vert \vert_{{(n,k),\mathcal{Y}}}},
    \end{align*}
    and evaluating the expression at $\mathcal{Y}_0$ we find
    \begin{align*}
        \Delta\left(f^{\mathrm{FW}(k)}_{\vert_{L^\Psi}}, \mathcal{Y}_0\right) & \ge \frac{\langle \Psi^\dagger(n^{\mathrm{SDP}}_L(X)),n^{\mathrm{FW}}(\mathcal{Y}_0) \rangle_{{(n,k),\mathcal{Y}}}}{\vert \vert \Psi^\dagger(n^{\mathrm{SDP}}_L(X))\vert \vert_{{(n,k),\mathcal{Y}}}} \\
        &= \frac{ \langle \Psi^\dagger (n^{\mathrm{SDP}}_L(X))  , -g^{\mathrm{FW}}(\mathcal{Y}_0) \rangle_{(n,k)}}{ \cnk \vert \vert\Psi^\dagger (n^{\mathrm{SDP}}_L(X)) \vert \vert_{(n,k)}} \\ 
        & \ge \frac{ \langle \Psi^\dagger (n^{\mathrm{SDP}}_L(X))  , (I, I , \ldots, I) \rangle_{(n,k)}}{ \vert \vert n^{\mathrm{SDP}}_L(X) \vert \vert} \\ 
        & =   \frac{\mathrm{tr}(n_L^{\mathrm{SDP}}(X))}{\vert \vert n_L^{\mathrm{SDP}}(X)\vert \vert},
    \end{align*}
    where the second inequality follows from Lemma \ref{lem:norm}.
    Setting $X = I$ and noting
    \begin{align*}
        \mathrm{tr}(n_L^{\mathrm{SDP}}(I)) = \langle I, n_L^{\mathrm{SDP}}(I) \rangle &= \frac{1}{\cnk}\langle g^{\mathrm{SDP}}(I), -n_L^{\mathrm{SDP}}(I)  \rangle \\
        &= \frac{1}{\cnk}\left(\Delta\left(\cnk f^{\mathrm{SDP}}_{\vert_L}, I \right)\right)^2
    \end{align*}
        we conclude
    \begin{align*}
         \Delta\left(f^{\mathrm{FW}(k)}_{\vert_{L^\Psi}}, \mathcal{Y}_0\right) \ge \frac{1}{\cnk}\frac{\Delta\left(\cnk f^{\mathrm{SDP}}_{\vert_L},I\right)^2}{\vert \vert n_L^{\mathrm{SDP}}(I) \vert \vert} = \frac{\Delta\left(\cnk f^{\mathrm{SDP}}_{\vert_L},I\right)}{\sqrt{\cnk}},
    \end{align*}
    because
    \[
        \vert \vert n_L^{\mathrm{SDP}}(I) \vert \vert = \frac{\Delta\left(\cnk f_{\vert L}^{\mathrm{SDP}}, I\right)}{\sqrt{\cnk}} = \sqrt{\cnk } \Delta\left(f_{\vert L}^{\mathrm{SDP}}, I\right).
    \]
\end{proof}

\section{Complexity analysis}\label{sec:complexity}

We begin the complexity analysis with the following lemma, which helps us to check whether the current point is close enough to the central path of the SDP. 

\begin{lemma}\label{lemma_closeEnough}
    Let $X_{\ell}$ be a feasible iterate for the SDP \eqref{SDPipm} and let the objective value at $X_{\ell}$ be $v$. Define the two subspaces $L^{\Psi}_{\ell}(v)$, $L_\ell$ as in \eqref{Lpsiellv}, \eqref{Lell} respectively. Then, if
    \[
        \Delta\left(f^{\mathrm{FW}(k)}_{\vert L^{\Psi}_{\ell}(v)}, \mathcal{Y}_0\right) \le \frac{1}{14},
    \]
    one has
    \[
        \Delta\left({f^{\mathrm{SDP}}_{{\eta_{v}}\vert_{L_\ell}}}, I\right) \le \frac{1}{9},
    \]
    where 
    \[
        f^{\mathrm{SDP}}_{\eta_{v}}(X) = \eta_v \langle A_0, X \rangle - \log \det (X),
    \]
    and $\eta_v$ is such that
    \[
        v = \min_{X \in L_\ell} f^{\mathrm{SDP}}_{\eta_{v}}(X).
    \] 
\end{lemma}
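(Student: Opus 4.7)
The plan is to route through the intermediate quantity $\Delta(f^{\mathrm{SDP}}_{\vert L_\ell(v)}, I)$, the Newton decrement of the ordinary log-det barrier on the further constrained subspace $L_\ell(v)$ (defined in~\eqref{Lellv}). First, I would apply Lemma~\ref{lemma:comp_decrements} to the pair $(L^{\Psi}_\ell(v), L_\ell(v))$ in place of $(L^{\Psi}, L)$. The hypotheses are met: $\Psi(\mathcal{Y}_0) = I$, $\langle A_i^{(\ell)}, I\rangle = \langle A_i, X_\ell\rangle = b_i$ for $i \ge 1$, and $\langle A_0^{(\ell)}, I\rangle = \langle A_0, X_\ell\rangle = v$. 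The key inclusion $\Psi^{\dagger}(L_\ell(v)^0) \subseteq L^{\Psi}_\ell(v)^0$ needed inside the proof still holds since $\Psi \circ \Psi^{\dagger} = \mathrm{id}$. The argument of Lemma~\ref{lemma:comp_decrements} then goes through verbatim and yields
\[
\Delta\left(f^{\mathrm{FW}(k)}_{\vert L^{\Psi}_\ell(v)}, \mathcal{Y}_0\right) \ge \sqrt{\cnk}\,\Delta\left(f^{\mathrm{SDP}}_{\vert L_\ell(v)}, I\right),
\]
so the hypothesis of the lemma gives $\Delta(f^{\mathrm{SDP}}_{\vert L_\ell(v)}, I) \le 1/(14\sqrt{\cnk})$.

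Next I would show, by an elementary projection computation, that $\Delta(f^{\mathrm{SDP}}_{\eta_v \vert L_\ell}, I) = \Delta(f^{\mathrm{SDP}}_{\vert L_\ell(v)}, I)$ for the prescribed $\eta_v$. The Hessian of $-\log\det$ at $I$ is the identity operator on $\mathbb{S}^n$, so the Hessian-induced norm at $I$ coincides with the Frobenius norm. Writing $L_\ell^0$ for the direction subspace of $L_\ell$ and setting $a := P_{L_\ell^0} A_0^{(\ell)}$, $p := P_{L_\ell^0} I$, we have $L_\ell(v)^0 = L_\ell^0 \cap a^{\perp}$, so $P_{L_\ell(v)^0} I = p - \tfrac{\langle a, p\rangle}{\|a\|^2}\, a$ and hence $\Delta(f^{\mathrm{SDP}}_{\vert L_\ell(v)}, I) = \|P_{L_\ell(v)^0} I\|_F$. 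On the other hand, the gradient of $f^{\mathrm{SDP}}_{\eta_v}$ at $I$ is $\eta_v A_0^{(\ell)} - I$, whose projection onto $L_\ell^0$ is $\eta_v a - p$. The defining condition on $\eta_v$ in the lemma is exactly the first-order condition that picks out the central-path shift compatible with objective value $v$, namely $\eta_v = \langle a, p\rangle/\|a\|^2$ (the Lagrange multiplier for the equality constraint $\langle A_0^{(\ell)}, X\rangle = v$). Substituting gives $\eta_v a - p = -P_{L_\ell(v)^0} I$, so the two Newton decrements are equal.

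Combining the two steps and using $\cnk \ge 1$ yields
\[
\Delta\left(f^{\mathrm{SDP}}_{\eta_v \vert L_\ell}, I\right) = \Delta\left(f^{\mathrm{SDP}}_{\vert L_\ell(v)}, I\right) \le \frac{1}{14 \sqrt{\cnk}} \le \frac{1}{14} < \frac{1}{9}.
\]
The main obstacle is the identification in the second step: one has to translate the compact condition defining $\eta_v$ in the lemma into the variational formula $\eta_v = \langle a, p\rangle/\|a\|^2$, recognising it as the Lagrange multiplier for fixing the objective at $v$. Once this is unpacked the remaining computations are just orthogonal projections at the point $I$, where the Hessian-weighted geometry trivialises to the Frobenius inner product; this is why the factor $\sqrt{\cnk}$ from Lemma~\ref{lemma:comp_decrements} is more than enough to close the gap between $1/14$ and $1/9$.
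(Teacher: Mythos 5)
Your first step is sound and matches the paper's: applying Lemma~\ref{lemma:comp_decrements} with the objective-level constraint appended (so that the pair of subspaces is $(L^{\Psi}_\ell(v),L_\ell(v))$ and the feasible points are $(\mathcal{Y}_0,I)$) gives $\Delta\bigl(f^{\mathrm{SDP}}_{\vert L_\ell(v)},I\bigr)\le 1/14$. The paper uses exactly this, though it discards the extra $\sqrt{\cnk}$ factor since $\cnk\ge 1$.

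The second step contains a genuine error. You assert that $\eta_v=\langle a,p\rangle/\|a\|^2$ with $a:=P_{L_\ell^0}A_0^{(\ell)}$ and $p:=P_{L_\ell^0}I$, and then conclude that $\Delta\bigl(f^{\mathrm{SDP}}_{\eta_v\vert L_\ell},I\bigr)=\Delta\bigl(f^{\mathrm{SDP}}_{\vert L_\ell(v)},I\bigr)$. But the Lagrange multiplier for the constraint $\langle A_0^{(\ell)},X\rangle=v$ in $\min\{-\log\det X:X\in L_\ell(v)\}$ is read off at the \emph{minimizer} $z(v)$, not at $I$; the stationarity condition there yields $\eta_v a = P_{L_\ell^0}\bigl(z(v)^{-1}\bigr)$, i.e.\ $\eta_v=\langle a,z(v)^{-1}\rangle/\|a\|^2$, which differs from $\hat\eta:=\langle a,p\rangle/\|a\|^2=\langle a,I\rangle/\|a\|^2$ unless $I=z(v)$. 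Writing $p=\hat\eta a + q$ with $q\perp a$, a direct computation gives
\begin{equation*}
\Delta\bigl(f^{\mathrm{SDP}}_{\eta_v\vert L_\ell},I\bigr)^2 \;=\; \|\eta_v a - p\|_F^2 \;=\; (\eta_v-\hat\eta)^2\|a\|_F^2 \;+\; \Delta\bigl(f^{\mathrm{SDP}}_{\vert L_\ell(v)},I\bigr)^2,
\end{equation*}
so the decrement you want to bound is in general \emph{larger} than the one you control, by a term you never estimate. Indeed $\hat\eta$ is simply the minimizer of $\eta\mapsto\|\eta a-p\|_F$, not the central-path parameter for level $v$.

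This gap is exactly why the paper's proof does not assert equality and settles for the weaker constant $1/9$ instead of $1/14$: it first controls $\|z(v)-I\|_I$ via \cite[Theorem~2.2.5]{Renegar2001} using $\Delta\bigl(f^{\mathrm{SDP}}_{\vert L_\ell(v)},I\bigr)\le 1/14$, and then identifies $\Delta\bigl(f^{\mathrm{SDP}}_{\eta_v\vert L_\ell},I\bigr)$ with $\|X_+-I\|_I$ (where $X_+$ is the Newton target of $f^{\mathrm{SDP}}_{\eta_v\vert L_\ell}$ at $I$) and bounds it by $\|X_+-z(v)\|_I+\|z(v)-I\|_I$, invoking \cite[Theorem~2.2.3]{Renegar2001} for the first term. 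That detour supplies precisely the missing control on the contribution of $\eta_v-\hat\eta$, since it bounds everything through the geometric quantity $\|z(v)-I\|_I$ rather than through an exact identity at $I$. To repair your argument you would need an explicit bound on $|\eta_v-\hat\eta|\,\|a\|_F$ in terms of $\|z(v)-I\|_I$, which would amount to reproving the paper's estimate.
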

\begin{proof}
    By Lemma \ref{lemma:comp_decrements} we know that
    \[
       \frac{1}{14} \ge \Delta\left(f^{\mathrm{FW}(k)}_{\vert L^{\Psi}_{\ell}(v)}, \mathcal{Y}_0\right) \ge \Delta\left(f^{\mathrm{SDP}}_{\vert L_{\ell}(v)}, I\right).
    \]
    Let now $z(v)$ be the point on the central path of the rotated SDP with objective value $v$ and let the corresponding parameter be $\eta_v$. By Theorem 2.2.5 from \cite{Renegar2001} we have
    \begin{equation}\label{rel:oneovereleven}
        \vert\vert z(v)-I \vert\vert_I \le \Delta\left(f^{\mathrm{SDP}}_{\vert L_{\ell}(v)}, I\right) +\frac{3 \Delta\left(f^{\mathrm{SDP}}_{\vert L_{\ell}(v)}, I\right)^2}{\left( 1- \Delta\left(f^{\mathrm{SDP}}_{\vert L_{\ell}(v)}, I\right) \right)^3} \le \frac{1}{11}.
    \end{equation}
    Let $X_+$ be the point returned by taking a Newton step at $X = I$ with respect to the function $f^{\mathrm{SDP}}_{\eta_v}$ restricted to $L_\ell$. By Theorem 2.2.3 in \cite{Renegar2001} we have
    \[
         \frac{\vert\vert z(v)-I\vert\vert^2_I}{1-\vert\vert z(v)-I\vert\vert_I} \ge \vert\vert X_+ -z(v)\vert\vert_I
    \]
    and hence
    \begin{align*}
        \Delta\left(f^{\mathrm{SDP}}_{\eta_{v}\vert_{L_\ell}}, I\right) = \vert\vert X_+-I\vert\vert_I  & \le \vert\vert X_+ -z(v)\vert\vert_I + \vert\vert z(v)- I\vert\vert_I \\
        & \le \frac{\vert\vert z(v)-I\vert\vert^2_I}{1-\vert\vert z(v)-I\vert\vert_I} + \vert\vert z(v)- I\vert\vert_I \le \frac{1}{9}.
    \end{align*}
\end{proof}

The Newton decrement of the rotated SDP being smaller than $1/9$ means we can safely perform the next predictor step. If the current point is too far away from the central path and one were to perform the predictor step the direction may not be approximately tangential to the central path. Hence, once the Newton decrement of the factor width program is small enough, so is the one of the SDP and we can perform the next predictor step, knowing the direction will be approximately tangential to the central path. After each predictor step we may have to take several corrector steps, to get back close to the central path.

\subsection*{Corrector step}
We will now find an upper bound on the number of corrector steps needed to get close to the central path. We know from Lemma \ref*{lemma:comp_values} that a decrease in the barrier for the factor width cone will lead to a decrease in the barrier function for our original SDP, meaning we made progress towards its central path. The following lemma asserts that if we are too far away from the central path we can attain at least a constant reduction in the barrier of the factor width cone and therefore obtaining a constant reduction in the SDP barrier as well.
\begin{lemma}\label{lemma_decrease}
    Let $X_{\ell}$ be a feasible iterate for the SDP \eqref{SDPipm} and let the objective value at $X_{\ell}$ be $v$. Define the subspace $L^{\Psi}_{\ell}(v)$ as in \eqref{Lpsiellv}. If
    \[
        \Delta\left(f^{\mathrm{FW}(k)}_{\vert L^{\Psi}_{\ell}(v)}, \mathcal{Y}_0\right) > \frac{1}{14}
    \]
    then
    \[
        f^{\mathrm{FW}(k)}_{\vert L^{\Psi}_{\ell}(v)}(\mathcal{Y}_0)-f^{\mathrm{FW}(k)}_{\vert L^{\Psi}_{\ell}(v)}(\mathcal{Y}^{\ast}) \ge \frac{1}{2688}.
    \]
\end{lemma}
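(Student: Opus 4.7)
The plan is to reduce the claim to the standard damped-Newton decrease estimate for self-concordant functions. Since the corrector subroutine in Algorithm~\ref{alg:subroutine_corrector_method} defines the returned iterate as an exact line-search minimizer $\mathcal{Y}^\ast = \mathrm{argmin}_t\, f^{\mathrm{FW}(k)}\bigl(\mathcal{Y}_0 + t\, n_{\vert L^\Psi_\ell(v)}(\mathcal{Y}_0)\bigr)$ along the restricted Newton direction, the value $f^{\mathrm{FW}(k)}_{\vert L^\Psi_\ell(v)}(\mathcal{Y}^\ast)$ is bounded above by the value attained at \emph{any} specific step size. It therefore suffices to exhibit a single witness step size $\alpha$ yielding a decrease of at least $1/2688$.

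First I would invoke self-concordance. By Lemma~\ref{lemma1}, $f^{\mathrm{FW}(k)}$ is self-concordant on $\mathrm{int}(\mathbb{S}^{(n,k)}_+)$, and self-concordance is preserved under restriction to an affine subspace (see, e.g., \cite[\S~1.6]{Renegar2001}), so $f^{\mathrm{FW}(k)}_{\vert L^\Psi_\ell(v)}$ is itself self-concordant. Writing $\Delta := \Delta\!\left(f^{\mathrm{FW}(k)}_{\vert L^\Psi_\ell(v)}, \mathcal{Y}_0\right)$, the classical damped-Newton estimate (e.g., \cite[Thm.~2.2.3]{Renegar2001}) asserts that the step size $\alpha = 1/(1+\Delta)$ keeps the iterate strictly inside the feasible cone and satisfies
\[
f^{\mathrm{FW}(k)}_{\vert L^\Psi_\ell(v)}\!\bigl(\mathcal{Y}_0 + \alpha\, n_{\vert L^\Psi_\ell(v)}(\mathcal{Y}_0)\bigr) \le f^{\mathrm{FW}(k)}_{\vert L^\Psi_\ell(v)}(\mathcal{Y}_0) - \omega(\Delta),
\]
where $\omega(t) := t - \ln(1+t)$. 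Using this particular $\alpha$ as the witness immediately transfers the bound to $\mathcal{Y}^\ast$.

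It then remains to show that $\omega(\Delta) \ge 1/2688$ whenever $\Delta > 1/14$. Because $\omega'(t) = t/(1+t) > 0$ on $(0,\infty)$, the function $\omega$ is strictly increasing, so $\omega(\Delta) > \omega(1/14)$. A clean algebraic estimate such as $\omega(t) \ge t^2 / \bigl(2(1+t)\bigr)$ (checked by noting both sides vanish at $t=0$ and that the derivative of the difference is non-negative) yields $\omega(1/14) \ge 1/420$, which comfortably exceeds $1/2688$; equivalently, a direct evaluation $\omega(1/14) = 1/14 - \ln(15/14) \approx 2.44\times 10^{-3}$ sits roughly an order of magnitude above $1/2688 \approx 3.72\times 10^{-4}$.

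I do not anticipate any substantive obstacle: the argument is a textbook application of the damped-Newton decrease bound for self-concordant functions combined with a one-variable numerical inequality. The only point requiring care is to remember that the corrector step performs an exact line search, so one must package the damped-Newton estimate as a witness rather than attempt to analyze the line-search minimizer directly.
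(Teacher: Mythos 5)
Your proof is correct and follows essentially the same strategy as the paper: exhibit a witness step along the restricted Newton direction, invoke a self-concordance decrease estimate, and evaluate the resulting constant. The only substantive difference is the choice of witness and of the specific bound: the paper takes a step of local norm $1/8$ and applies Renegar's Theorem~2.2.2 (the cubic-overestimate inequality $f(y) \le f(x) + \langle g(x), y-x\rangle + \tfrac{1}{2}\|y-x\|_x^2 + \tfrac{\|y-x\|_x^3}{3(1-\|y-x\|_x)}$), arriving at exactly $1/2688$; you take the damped-Newton step $\alpha = 1/(1+\Delta)$ and invoke the $\omega(\Delta) = \Delta - \ln(1+\Delta)$ decrease, giving the slightly stronger $\omega(1/14) > 1/420$. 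One small correction: the $\omega$-decrease is \emph{not} Renegar's Theorem~2.2.3 --- that result is the quadratic-convergence estimate for the undamped Newton step, which the paper uses separately in Lemma~\ref{lemma_closeEnough} --- so you should either cite the damped-Newton decrease from Nesterov--Nemirovskii, or, to stay within Renegar's framework as the paper does, derive the constant reduction directly from Theorem~2.2.2 with your chosen step size.
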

\begin{proof}
    If $\Delta\left(f^{\mathrm{FW}(k)}_{ \vert L^{\Psi}_{\ell}(v)}, \mathcal{Y}_0\right) > \frac{1}{14}$ the corrector step will employ a line search to find $\mathcal{Y}^{\ast}$, i.e. the point in $L^{\Psi}_{\ell}(v)$ that minimizes $f^{\mathrm{FW}(k)}$. Let $n_{L^{\Psi}_{\ell}(v)}(\mathcal{Y}_0)$ be the Newton step taken from $\mathcal{Y}_0$ and let $t = \frac{1}{8 \| n_{L^{\Psi}_{\ell}(v)}(\mathcal{Y}_0) \|_{(n,k),\mathcal{Y}_0}}$, where the norm in the denominator is the local norm at $\mathcal{Y}_0$ induced by $\langle \cdot, \cdot \rangle_{(n.k)}$. Then, for
    \[
        \tilde{\mathcal{Y}} = \mathcal{Y}_0 + t \, n_{L^{\Psi}_{\ell}(v)}(\mathcal{Y}_0)
    \]
    we find by Theorem 2.2.2 in \cite{Renegar2001}
    \begin{align*}
        f^{\mathrm{FW}(k)}(\tilde{\mathcal{Y}}) & \le f^{\mathrm{FW}(k)}(\mathcal{Y}_0)-\frac{1}{14}\frac{1}{8}+\frac{1}{2}\left( \frac{1}{8} \right)^2+ \frac{(1/8)^3}{3(1-1/8)} \\
        &\le f^{\mathrm{FW}(k)}(\mathcal{Y}_0)-\frac{1}{2688}.
    \end{align*}
\end{proof}
Note that this implies together with Lemma \ref{lemma:comp_values} that
\begin{equation}
    \begin{aligned}
        \frac{1}{2688} \le f^{\mathrm{FW}(k)}(\mathcal{Y}_0)-f^{\mathrm{FW}(k)}(\tilde{\mathcal{Y}}) &\le  f^{\mathrm{FW}(k)}(\mathcal{Y}_0)- f^{\mathrm{FW}(k)}(\mathcal{Y}^\ast) \\
        & \le \cnk \left( f^{\mathrm{SDP}}(X_\ell)-f^{\mathrm{SDP}}(X_{\ell+1}) \right).
    \end{aligned}
\end{equation}
Knowing each line search reduces the distance to the targeted point on the central path at least by a constant amount will allow us to bound the number of line searches we need to get close enough if we have an upper bound on the distance of the result of the predictor step and the corresponding point on the central path of the SDP.

\begin{lemma}\label{lemma:distance}
    Let $X_1$ be close to a point $z(v_1)$ on the central path of the SDP in the sense that $\Delta\left(f^{\mathrm{SDP}}_{L_{\ell}(v_1)}, X_1\right) \le \frac{1}{9}$. Further, let $X_2$ be the result of the predictor step and $z(v_2)$ be the point on the central path with the same objective value as $X_2$. Then
    \[
        f^{\mathrm{SDP}}(X_{2})-f^{\mathrm{SDP}}(z(v_2)) \le n \left( \log \frac{1}{1-\sigma}\right)+\frac{1}{154}.
    \]
\end{lemma}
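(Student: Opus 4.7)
The plan is to telescope the difference as
\begin{equation*}
f^{\mathrm{SDP}}(X_2) - f^{\mathrm{SDP}}(z(v_2)) = \bigl[f^{\mathrm{SDP}}(X_2) - f^{\mathrm{SDP}}(X_1)\bigr] + \bigl[f^{\mathrm{SDP}}(X_1) - f^{\mathrm{SDP}}(z(v_1))\bigr] + \bigl[f^{\mathrm{SDP}}(z(v_1)) - f^{\mathrm{SDP}}(z(v_2))\bigr]
\end{equation*}
and bound each of the three brackets separately. The first will contribute the dominant $n\log\frac{1}{1-\sigma}$ term, the third will be shown to be non-positive, and the second will produce the small constant $\frac{1}{154}$.

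For the first bracket, I would write the predictor iterate as the convex combination $X_2 = (1-\sigma) X_1 + \sigma X^{\ast}$, where $X^{\ast}$ is the point reached by moving the full admissible distance in the projected affine-scaling direction and therefore lies in the closure of $\mathrm{FW}_n(k) \subseteq \mathbb{S}^n_+$. Minkowski's determinant inequality then yields $\det(X_2)^{1/n} \ge (1-\sigma)\det(X_1)^{1/n} + \sigma\det(X^{\ast})^{1/n} \ge (1-\sigma)\det(X_1)^{1/n}$, hence $f^{\mathrm{SDP}}(X_2) - f^{\mathrm{SDP}}(X_1) \le n\log\frac{1}{1-\sigma}$. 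For the third bracket, I would show that $\eta \mapsto f^{\mathrm{SDP}}(z(\eta))$ is non-decreasing along the central path. Differentiating $\phi(\eta):=f^{\mathrm{SDP}}(z(\eta))$ and using the KKT conditions for $z(\eta)$ together with $z(\eta)\in L_\ell$ for every $\eta$ yields $\phi'(\eta) = -\eta\,\frac{d}{d\eta}\langle A_0,z(\eta)\rangle \ge 0$, because $\langle A_0,z(\eta)\rangle$ is non-increasing in $\eta$ — a standard one-line consequence of the defining minimization. Since $v_1 > v_2$ corresponds to $\eta_1 < \eta_2$, we conclude $f^{\mathrm{SDP}}(z(v_1)) \le f^{\mathrm{SDP}}(z(v_2))$.

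For the second bracket, I would invoke self-concordance together with the proximity of $X_1$ to $z(v_1)$. Since $z(v_1)$ minimizes $f^{\mathrm{SDP}}$ over $L_\ell(v_1)$, its tangential gradient vanishes, so the lower self-concordance bound $f(y)\ge f(x)+g(x)^T(y-x)+\rho_{-}(\|y-x\|_x)$ with $\rho_{-}\ge 0$, applied at $x=X_1$, $y=z(v_1)$ and combined with Cauchy--Schwarz in the local inner product (only the tangential part of $g^{\mathrm{SDP}}(X_1)$ contributes because $X_1 - z(v_1)$ is tangent to $L_\ell(v_1)$), gives
\begin{equation*}
f^{\mathrm{SDP}}(X_1) - f^{\mathrm{SDP}}(z(v_1)) \le \Delta\!\left(f^{\mathrm{SDP}}_{\vert L_\ell(v_1)}, X_1\right)\cdot \|X_1 - z(v_1)\|_{X_1}.
\end{equation*}
By Lemma~\ref{lemma:comp_decrements} the algorithmic termination criterion on the FW-side decrement ($\le 1/14$) transfers to $\Delta(f^{\mathrm{SDP}}_{\vert L_\ell(v_1)}, X_1) \le 1/14$, and Renegar's Theorem~2.2.5, used exactly as in the chain preceding \eqref{rel:oneovereleven}, gives $\|X_1 - z(v_1)\|_{X_1} \le 1/11$. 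Multiplying yields $\frac{1}{14}\cdot\frac{1}{11}=\frac{1}{154}$, and adding this to the Minkowski bound finishes the proof.

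The main obstacle is the second bracket: one has to track carefully which flavour of Newton decrement is in force. The stated hypothesis $\le 1/9$ is too loose to produce the sharp constant $\frac{1}{154}$ directly; the proof really needs the tighter bound $\le 1/14$ on the restricted decrement, which is simultaneously available through Lemma~\ref{lemma:comp_decrements} whenever this lemma is invoked inside the algorithm. Getting the factorisation $\frac{1}{154}=\frac{1}{14}\cdot\frac{1}{11}$ to come out cleanly from the Cauchy--Schwarz step and Renegar's distance bound is therefore the part of the argument that needs the most care.
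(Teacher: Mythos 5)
The paper does not give an argument here at all: the ``proof'' is a bare citation to page 54 of Renegar's monograph, together with the observation that $\vartheta_{f^{\mathrm{SDP}}}=n$. Your reconstruction therefore supplies content the paper leaves implicit. The telescoping decomposition into $[f(X_2)-f(X_1)]+[f(X_1)-f(z(v_1))]+[f(z(v_1))-f(z(v_2))]$ is exactly the right one, and your handling of the first and third brackets is sound. For the first bracket, note only that Minkowski's determinant inequality is a $-\log\det$-specific device: Renegar's generic argument proceeds instead from the recession-monotonicity of logarithmically homogeneous barriers (if $d\in\overline{\mathcal{K}}$ then $f(x+d)\le f(x)$), which yields $f((1-\sigma)x+\sigma y)\le f(x)+\vartheta_f\log\frac{1}{1-\sigma}$ for any $y\in\overline{\mathcal{K}}$. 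For $-\log\det$ the two routes coincide, and you have correctly verified the convex-combination structure $X_2=(1-\sigma)X_1+\sigma X^\ast$ with $X^\ast\succeq 0$ (your phrasing that $X^\ast\in\mathrm{FW}_n(k)$ is off by a rescaling by $X_1^{1/2}(\cdot)X_1^{1/2}$, but $X^\ast\succeq 0$ still holds, which is all you use). The third-bracket monotonicity $\frac{d}{d\eta}f^{\mathrm{SDP}}(z(\eta))=-\eta\,\frac{d}{d\eta}\langle A_0,z(\eta)\rangle\ge 0$ is a correct envelope-theorem computation.

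The subtle point you flag in the second bracket is real and worth recording. The chain $f(X_1)-f(z(v_1))\le \Delta\left(f^{\mathrm{SDP}}_{\vert L_\ell(v_1)},X_1\right)\cdot\|X_1-z(v_1)\|_{X_1}$ together with \cite[Thm.~2.2.5]{Renegar2001} gives $\frac{1}{14}\cdot\frac{1}{11}=\frac{1}{154}$ only under $\Delta\le\frac{1}{14}$; with the hypothesis $\le\frac{1}{9}$ as literally stated, the same chain yields roughly $0.018$, and the alternative cubic bound $\frac{r^2}{2}+\frac{r^3}{3(1-r)}$ applied at $z(v_1)$ fares no better. In the context of the algorithm the restricted Newton decrement really is $\le\frac{1}{14}$ (the predictor step fires only when the FW-side decrement is $\le\frac{1}{14}$, and Lemma~\ref{lemma:comp_decrements} transfers this to the SDP side), so your reconstruction is valid as a proof for what the algorithm actually encounters, and it exposes that the lemma's stated proximity threshold of $\frac{1}{9}$ is too loose to deliver the constant $\frac{1}{154}$; stating the hypothesis as $\Delta\left(f^{\mathrm{SDP}}_{\vert L_\ell(v_1)},X_1\right)\le\frac{1}{14}$ would make the lemma self-contained.
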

\begin{proof}
    A proof of this statement for generic self-concordant barriers may be found on page 54 of \cite{Renegar2001}. We have used that the barrier parameter for the barrier of the psd cone is given by $\vartheta_{f^{\mathrm{SDP}}} = n$. \qedhere
\end{proof}

\begin{lemma}
    Let $v_2$ be the objective value of the result $X_2$ of the predictor step. The maximum number $K$ of line searches needed to find a point $X_{K+2}$ which is close enough to $z(v_2)$ in the sense that $\Delta\left(f^{\mathrm{SDP}}_{\vert L_{\ell}(v_2)}, X_{K+2}\right) \le \frac{1}{9}$ is
    \[
        K = \left\lceil 2688 \cnk \left( n \log\left(\frac{1}{1-\sigma}\right) +\frac{1}{154} \right)\right\rceil ,
    \]
    where $z(v_2)$ is the point on the central path with objective value $v_2$.
\end{lemma}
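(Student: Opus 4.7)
The plan is to convert the available $f^{\mathrm{SDP}}$-gap between $X_2$ and the target central-path point $z(v_2)$ into a hard budget on the total possible barrier decrease, and then show that each corrector line search spends at least a fixed fraction of that budget.

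For the budget, I would apply Lemma \ref{lemma:distance} to obtain
\[
f^{\mathrm{SDP}}(X_2) - f^{\mathrm{SDP}}(z(v_2)) \le n \log \tfrac{1}{1-\sigma} + \tfrac{1}{154}.
\]
Since $z(v_2)$ minimizes $\eta_{v_2} \langle A_0, X\rangle + f^{\mathrm{SDP}}(X)$ over $L_\ell$ by the central-path definition, it also minimizes $f^{\mathrm{SDP}}$ among feasible points with the fixed objective value $v_2$. All iterates produced by the corrector phase keep the objective at $v_2$, so $f^{\mathrm{SDP}}(X_{K+2}) \ge f^{\mathrm{SDP}}(z(v_2))$, and the cumulative decrease in $f^{\mathrm{SDP}}$ across the whole corrector phase cannot exceed the above gap.

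For the per-step spending, I would reuse the proof of Lemma \ref{lemma_decrease}, noting that its invocation of Theorem 2.2.2 of \cite{Renegar2001} only requires the Newton decrement to exceed $1/14$; it does not rely on the iterate being $\mathcal{Y}_0$ specifically. Thus at any iterate inside the corrector loop, while the FW Newton decrement still exceeds $1/14$, one damped Newton line search reduces $f^{\mathrm{FW}(k)}$ by at least $1/2688$. Across an outer iteration $X_\ell \to X_{\ell+1}$ that performs $m_\ell$ such line searches, Lemma \ref{lemma:comp_values} (applied with $\mathcal{Y}_0$ as in \eqref{Y0} and $\mathcal{Y}^{\ast}$ the end-of-subroutine iterate) yields
\[
\cnk \bigl(f^{\mathrm{SDP}}(X_\ell) - f^{\mathrm{SDP}}(X_{\ell+1})\bigr) \ge \tfrac{m_\ell}{2688}.
\]
Telescoping this over the (possibly several) corrector-triggered outer iterations between $X_2$ and the first iterate $X_{K+2}$ at which the FW Newton decrement drops to $\le 1/14$ gives $\cnk \bigl(f^{\mathrm{SDP}}(X_2) - f^{\mathrm{SDP}}(X_{K+2})\bigr) \ge K/2688$. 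Combined with the budget bound this rearranges to $K \le 2688\,\cnk\bigl(n \log \tfrac{1}{1-\sigma} + \tfrac{1}{154}\bigr)$, and a final appeal to Lemma \ref{lemma_closeEnough} transfers the terminal FW condition into the required $\Delta\bigl(f^{\mathrm{SDP}}_{\vert L_\ell(v_2)}, X_{K+2}\bigr) \le 1/9$.

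The principal obstacle is that the corrector phase can span several invocations of the subroutine separated by data-matrix rescalings, so I have to verify that both the per-step FW decrease and the comparison bound of Lemma \ref{lemma:comp_values} chain correctly across rescalings. This works because after each rescaling the current iterate is placed at $\mathcal{Y}_0$ in the freshly rotated coordinates, so Lemma \ref{lemma:comp_values} applies verbatim on each segment and the resulting $f^{\mathrm{SDP}}$-decreases telescope; the objective value $v_2$ is preserved throughout because the corrector works inside $L^{\Psi}_\ell(v_2)$ and rescaling by any $\mathcal{Y}^{\ast}$ in that subspace keeps $\langle A_0, X_{\ell+1}\rangle = v_2$.
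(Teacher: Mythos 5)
Your proof is correct and follows essentially the same route as the paper's: budget the total $f^{\mathrm{SDP}}$-decrease via Lemma~\ref{lemma:distance}, translate the per-line-search FW decrease of Lemma~\ref{lemma_decrease} into an SDP-barrier decrease of at least $1/(2688\cnk)$ via Lemma~\ref{lemma:comp_values}, and use Lemma~\ref{lemma_closeEnough} to convert the terminal FW Newton-decrement condition into the required SDP bound. Your telescoping argument, together with the explicit observations that Lemma~\ref{lemma_decrease}'s damped-Newton step applies at any iterate with Newton decrement above $1/14$ and that rescaling shifts $f^{\mathrm{SDP}}$ only by an additive constant so barrier gaps chain across outer iterations, fills in details the paper's proof treats informally.
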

\begin{proof}
    We know that the distance between the result of the predictor phase and the targeted point on the central path is at most $n \left( \log \frac{1}{1-\sigma}\right)+\frac{1}{154}$ by Lemma \ref{lemma:distance}. Moreover, using Lemma \ref{lemma_decrease} we find that in each corrector step we reduce this distance by at least $\frac{1}{2688\cnk}$, unless the SDP Newton decrement at $I$ is already small enough to perform the next predictor step. If after rescaling the Newton decrement of the factor width program satisfies
    \[
        \Delta\left(f^{\mathrm{FW}(k)}_{\vert L^{\Psi}_{\ell}(v)}, \mathcal{Y}_0\right) > \frac{1}{14},
    \]
    thereby implying by Lemma \ref{lemma_closeEnough} that $I$ is not close to the central path of the SDP we can perform another corrector step yielding at least a constant decrease of $\frac{1}{2688\cnk}$ of the distance to the central path, and rescale again. This process can be continued until we do not get such a constant decrease anymore at which point we know we must be close enough to the central path, in the sense of Lemma \ref{lemma_closeEnough}. This is because if the decrease is not greater than $\frac{1}{2688\cnk}$ we know that the Newton decrement cannot satisfy
    \[
        \Delta\left(f^{\mathrm{FW}(k)}_{\vert L^{\Psi}_{\ell}(v)}, \mathcal{Y}_0\right) > \frac{1}{14},
    \]
    from which follows by  Lemma \ref{lemma_closeEnough} that
    \[
        \Delta\left(f^{\mathrm{SDP}}_{L_{\ell}(v)}, I\right) \le \frac{1}{9}.
    \]
    This implies we are close enough to the central path to perform the next predictor step.
    Hence, after at most
    \[
         K = \left \lceil 2688 \cnk \left( n \log\left(\frac{1}{1-\sigma}\right) +\frac{1}{154} \right) \right \rceil
    \]
    corrector steps we are close enough to the central path so that we can perform the next predictor step.
\end{proof}

\subsection*{Predictor step}

We will make use of the analysis of the short step interior point method discussed in Section 2.4.2 in \cite{Renegar2001}. We will show that each predictor step reduces the objective value by an amount at least as large as the objective decrease by the short-step interior point method. This will allow us to conclude the maximum number of predictor steps needed to obtain an $\varepsilon$ optimal solution of the given SDP. Note that the decrease in objective value obtained by our predictor method is as follows. Let $X$ be the point from where the predictor method starts and $-(A_0)_X := -H(X)A_0$ be the direction. Then for $\sigma \ge \frac{1}{4}$ we find
\begin{align*}
    \langle A_0, X-s^\ast \sigma \, (A_0)_X  \rangle &= \langle (A_0)_X, X \rangle - s^\ast \sigma \langle A_0, (A_0)_X \rangle \\
    & \le \langle A_0, X \rangle -\frac{1}{4}\| (A_0)_X \|_X.
\end{align*}
This implies the decrease is at least as large the one obtained in one iteration of the short-step method, as discussed in \cite[§~2.4.2]{Renegar2001}. Renegar's analysis shows that short-step method leads to an $\varepsilon$ optimal solution in at most 
\[
    K = 10\sqrt{\vartheta_f}\log(\vartheta_f /(\varepsilon \, \eta_0))
\] 
steps, where $\eta_0$ is such that our starting point $X_0$ is close to $z_{\eta_0}$. By an $\varepsilon$ optimal solution we mean a feasible solution $X$ such that 
\[
    v^\ast_\mathrm{SDP} \le \langle A_0 , X \rangle \le v^\ast_\mathrm{SDP} + \varepsilon.
\]

\subsection*{Predictor and corrector steps combined}
Combining the complexity analysis of predictor and corrector steps we arrive at the following theorem.
\begin{theorem}
    Let $X_0$ be a feasible solution of the SDP \eqref{IterativeSchemeprimal} and assume it is close to some point $z_{\eta_0}$ on the corresponding central path in the sense that $\Delta\left(f_{\vert_{L(v)}}^{\mathrm{SDP}},X_0\right)<1/14$, where $L$ is as in \eqref{Lellv} for $v = \langle A_0, X_0 \rangle$. Algorithm \ref{alg:pc_ipm} converges to an $\varepsilon$ optimal solution in at most
    \begin{align*}
        K &= \left \lceil 2688 \cnk \left( n \log\left(\frac{1}{1-\sigma}\right) +\frac{1}{154} \right) \right \rceil  10\sqrt{n}\log(n /(\varepsilon \, \eta_0)) \\
		& = O\left({n-1 \choose k-1}n^{3/2}\log\left(\frac{1}{1-\sigma}\right)\log\left(\frac{n}{\epsilon \eta_0}\right)\right).
	\end{align*}
    steps.
\end{theorem}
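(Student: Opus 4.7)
The plan is to combine the two complexity estimates that have already been established in the preceding paragraphs: a bound on the number of outer (predictor) iterations and a bound on the number of inner (corrector) line searches between consecutive predictor calls. The product of these bounds gives the total number of steps, and the $O$-expression follows by substituting $\vartheta_{f^{\mathrm{SDP}}}=n$ and simplifying.

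For the outer loop I would invoke Renegar's analysis of the short-step interior point method. The hypothesis $\Delta\!\left(f^{\mathrm{SDP}}_{\vert L(v)},X_0\right)<1/14$ ensures that at the start of the first outer iteration the SDP-side Newton decrement at the current iterate is at most $1/9$ (via the argument of Lemma \ref{lemma_closeEnough}), so the initial predictor call is legitimate. As observed in the paragraph preceding the theorem, each predictor step decreases the objective by at least $\tfrac{1}{4}\|(A_0)_X\|_X$, which matches the per-iteration decrease of the short-step method discussed in \cite[\S 2.4.2]{Renegar2001}. Hence the number of predictor steps needed to reach an $\varepsilon$-optimal feasible solution is bounded by $10\sqrt{\vartheta_{f^{\mathrm{SDP}}}}\log(\vartheta_{f^{\mathrm{SDP}}}/(\varepsilon\eta_0)) = 10\sqrt{n}\log(n/(\varepsilon\eta_0))$.

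For the inner loop I would recall the bound derived just above: immediately after a predictor step the distance from the current iterate to the corresponding point on the rescaled central path is at most $n\log(1/(1-\sigma))+1/154$ by Lemma \ref{lemma:distance}, while each corrector line search reduces the SDP barrier by at least $1/(2688\cnk)$, which follows from combining the constant factor-width barrier decrease of Lemma \ref{lemma_decrease} with the comparison in Lemma \ref{lemma:comp_values}. Dividing these two quantities and taking the ceiling yields the factor $\lceil 2688\cnk(n\log(1/(1-\sigma))+1/154)\rceil$ on the maximum number of corrector steps between two consecutive predictor calls.

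Multiplying the outer and inner bounds gives the expression for $K$, and the $O$-estimate follows from $\sqrt{n}\cdot n = n^{3/2}$ together with the fact that the additive constant $1/154$ is absorbed into the $\log(1/(1-\sigma))$ factor. The only place requiring care in composing the two analyses is verifying that the corrector loop exits into a state from which the next predictor step is justified: this is exactly the content of Lemma \ref{lemma_closeEnough}, which converts the exit criterion $\Delta\!\left(f^{\mathrm{FW}(k)}_{\vert L^{\Psi}_{\ell}(v)},\mathcal{Y}_0\right)\le 1/14$ on the factor-width side into the $\le 1/9$ bound on the SDP-side Newton decrement needed to launch the subsequent predictor step. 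I expect this composition check to be the only nontrivial piece; everything else is bookkeeping of the estimates already at hand.
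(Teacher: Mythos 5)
Your proposal is correct and matches the paper's implicit argument: the theorem is proved by multiplying the predictor-step count from Renegar's short-step analysis ($10\sqrt{\vartheta_f}\log(\vartheta_f/(\varepsilon\eta_0))$ with $\vartheta_{f^{\mathrm{SDP}}}=n$) by the corrector-step count established just before the theorem, using Lemma~\ref{lemma_closeEnough} to justify that the corrector loop terminates in a state from which the next predictor step is legitimate. The paper gives no explicit proof block for this theorem precisely because it is this composition of the preceding estimates, which you have reproduced faithfully.
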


The assumption of a starting point "close to the central path" may be satisfied by the self-dual embedding strategy \cite{deKlerk1997}. Alternatively, one may first solve an auxiliary SDP problem, as in \cite[§~2.4.2]{Renegar2001}, by using the algorithm we have presented. The solution of this auxiliary problem then yields a point close to the central path of the original SDP problem.

\section{Discussion and future prospects}\label{sec:discussion}
We finish with a brief discussion on the prospects of efficient implementation of Algorithm \ref{alg:pc_ipm}.
\subsection*{Parallelization}
Essentially, the contribution of the present paper lies in providing an algorithm for solving SDPs which is much more suitable for parallelization than the ordinary interior point method working over $\mathbb{S}^n_+$. Given common memory access, the computation of the necessary data for the respective cone factors $\mathbb{S}^k_+$ is local, meaning these tasks can be distributed among processor cores leading to a runtime decrease since each corrector step involves ${{n}\choose{k}}$ parallel computations of $O(k^3m + k^2m^2 + m^3)$ flops. This offers the potential to perform the centering steps much more quickly than for SDP interior point methods through parallel computation.

\subsection*{Replacing the predictor step}
In their paper \cite{Sznaier22}, the authors propose to perform a fixed number of decrease steps, where a decrease step consists of solving \eqref{fwProg} and rescaling with respect to the optimal solution. In our algorithm we considered a different method to decrease the objective value, i.e., the predictor method, where we use the traditional SDP affine scaling direction.

\subsection*{Tractability of factor width cones}
The entire approach described in this paper relies on the premise that one may optimize more efficiently over $\mathrm{FW}_n(k)$ than over $\mathbb{S}^n_+$. In practice this has not yet been demonstrated convincingly for $k > 2$, although the consensus is that it should be possible. Some recent ideas that could be useful in this regard are:
\begin{itemize}
    \item the idea to optimize over the dual cone of $\mathrm{FW}_n(k)$ by utilizing clique trees \cite{Zhang2021}
    \item a variation on the factor width cone involving fewer blocks \cite{Zheng2022}.
\end{itemize}
In addition, it would be very helpful to know a computable self-concordant barrier functional for the cone $\mathrm{FW}_n(k)$, as well as its complexity parameter.
\\\\
\textbf{Acknowledgements.} The authors would like to thank Georgina Hall for insightful discussions on the topic on multiple occasions. Moreover, the authors thank Micha\"el Gabay and Arefeh Kavand for fruitful conversations on different angles of the subject matter. 
\\\\
\textbf{Funding.} 
This work is supported by the European Union’s Framework Programme for Research and Innovation Horizon 2020 under the Marie Skłodowska-Curie grant agreement N. 813211 (POEMA).

\end{document}